\numberwithin{equation}{section}
\numberwithin{figure}{section}
\theoremstyle{plain}
\newtheorem{thm}{Theorem}[section]
\theoremstyle{plain}
\newtheorem{prop}[thm]{Proposition}
\theoremstyle{remark}
\newtheorem{rem}[thm]{Remark}
\theoremstyle{plain}
\theoremstyle{plain}
\theoremstyle{plain}
\newtheorem{lem}[thm]{Lemma}
\theoremstyle{definition}
\theoremstyle{definition}
\theoremstyle{definition}
\newtheorem{con}{Conjecture}
\newtheorem*{thmA*}{Theorem A}
\newtheorem*{thmA1*}{Theorem A1}
\newtheorem*{thmA2*}{Theorem A2}
\newtheorem*{thmA3*}{Theorem A3}
\newcommand{\real}{\mathbb{R}}
\newcommand{\Tr}{\textnormal{Tr}}
\newcommand{\bh}{\mathcal{B(H)}}
\newcommand{\ph}{\mathcal{P(H)}}
\renewcommand{\dh}{\mathcal{D(H)}}
\newcommand{\bhx}{\mathcal{B(H)}^{\times}}
\newcommand{\phx}{\mathcal{P(H)}^{\times}}
\newcommand{\dhx}{\mathcal{D(H)}^{\times}}
\newcommand{\h}{\mathcal{H}}
\renewcommand{\a}{\alpha}
\newcommand{\vertiii}[1]{{\left\vert\kern-0.25ex\left\vert\kern-0.25ex\left\vert #1 
		\right\vert\kern-0.25ex\right\vert\kern-0.25ex\right\vert}}
\begin{document}
\title{From Wigner-Yanase-Dyson conjecture to Carlen-Frank-Lieb conjecture}
\author{Haonan Zhang}

\address{Laboratoire de Math\'ematiques, Universit\'e Bourgogne Franche-Comt\'e, 25030 Besan\c con, France and Institute of Mathematics, Polish Academy of Sciences, ul. \'Sniadeckich 8, 00-656 Warszawa, Poland}
\address{Current address: Institute of Science and Technology Austria (IST Austria), Am Campus 1, 3400 Klosterneuburg, Austria}

\email{haonan.zhang@ist.ac.at}

\subjclass[2010]{Primary 15A15, 81P45; Secondary 47A56, 94A17}

\keywords{Joint convexity/concavity, quantum relative entropy, Data Processing Inequality}
\maketitle

\begin{abstract}

In this paper we study the joint convexity/concavity of the trace functions
\[
\Psi_{p,q,s}(A,B)=\Tr(B^{\frac{q}{2}}K^*A^{p}KB^{\frac{q}{2}})^s,~~p,q,s\in \mathbb{R},
\]
where $A$ and $B$ are positive definite matrices and $K$ is any fixed invertible matrix. We will give full range of $(p,q,s)\in\mathbb{R}^3$ for $\Psi_{p,q,s}$ to be jointly convex/concave for all $K$. As a consequence, we confirm a conjecture of Carlen, Frank and Lieb. In particular, we confirm a weaker conjecture of Audenaert and Datta and obtain the full range of $(\alpha,z)$ for $\alpha$-$z$ R\'enyi relative entropies to be monotone under completely positive trace preserving maps. We also give simpler proofs of many known results, including the concavity of $\Psi_{p,0,1/p}$ for $0<p<1$ which was first proved by Epstein using complex analysis. The key is to reduce the problem to the joint convexity/concavity of the trace functions
\[
\Psi_{p,1-p,1}(A,B)=\Tr K^*A^{p}KB^{1-p},~~-1\le p\le 1,
\]
using a variational method. 
\end{abstract}

\section{Introduction}
The joint convexity/concavity of the trace functions
\begin{equation}\label{equ:defn of Psi_p,q,s}
\Psi_{p,q,s}(A,B)=\Tr(B^{\frac{q}{2}}K^*A^{p}KB^{\frac{q}{2}})^s,~~p,q,s\in \mathbb{R},
\end{equation}
has played an important role in mathematical physics and quantum information. Its study can be traced back to the celebrated \emph{Lieb's Concavity Theorem} \cite{Lieb73WYD}, which states that $\Psi_{p,q,1}$ is jointly concave for all $0\le p,q\le1,p+q\le 1$ and for all $K$. Using this, Lieb confirmed the Wigner-Yanase-Dyson conjecture \cite{WY63}: \emph{for $0<p<1$ and any self-adjoint $K$, the function 
\begin{equation}\label{eq:skew information}
S_p(\rho,K):=\frac{1}{2}\Tr[\rho^p,K][\rho^{1-p},K]=-\Tr\rho K^2+\Tr\rho^{p}K\rho^{1-p} K,
\end{equation}
is concave in $\rho$, where $[A,B]=AB-BA$.} We refer to \cite{WY63,Lieb73WYD} for more details about the \emph{skew information} $-S_p(\rho,K)$.
\smallskip

Since then, a lot of work around the joint convexity/concavity of $\Psi_{p,q,s}$ has emerged \cite{Ando79,Bekjan04convexity,CFL16some,CL08minkowski-II,CL99minkowski-I,Epstein73,FL13DPIsandwich,Hiai13concavity-I,Hiai16concavity-II}, following \cite{Lieb73WYD}. Through this line of research many methods have been developed. Two main methods are the ``analytic method'' and the ``variational method''. We refer to a very nice survey paper \cite{CFL18conjecture} for more historical information and the explanation of these two methods.

\smallskip
Another motivation to study the joint convexity/concavity of $\Psi_{p,q,s}$ comes from quantum information theory. Indeed, the joint convexity/concavity of $\Psi_{p,q,1/(p+q)}$ is closely related to the monotonicity (or \emph{Data Processing Inequality}) of the $\a$-$z$ R\'enyi relative entropies, which has become a frontier topic in recent years. We shall recall this in Section \ref{sect:backgroud of quantum information}. Starting from this Audenaert and Datta conjectured that:

\begin{con}\cite[Conjecture 1]{AD15alpha-z}\label{conj:Audenaert-Datta}
	If $1\leq p\leq 2,~-1\leq q<0$ and $(p,q)\ne(1,-1)$, then for any matrix $K$, the function
	\[
	\Psi_{p,q,1/(p+q)}(A,B)=\Tr(B^{\frac{q}{2}}K^*A^{p}KB^{\frac{q}{2}})^{\frac{1}{p+q}},
	\]
	is jointly convex in $(A,B)$, where $A$ and $B$ are positive definite matrices.
\end{con}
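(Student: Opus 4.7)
The plan is to express $\Psi_{p,q,1/(p+q)}$ as a supremum over an auxiliary operator of functionals that are jointly convex in $(A,B)$, where each such functional is handled by the joint convexity of the baseline family $\Psi_{p',1-p',1}$ with $p'\in[-1,1]$. The baseline inputs are Ando's theorem (and its dual extension) giving joint convexity of $\Psi_{p',1-p',1}$ for $p'\in[-1,0]\cup[1,2]$, and Lieb's concavity theorem for $p'\in[0,1]$. With $s:=1/(p+q)$, the hypotheses force $s>1/2$; the case $s=1$ is immediate from the baseline with $p'=p\in[1,2]$, so we may assume $s\ne 1$.

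For $s>1$ (i.e., $p+q<1$), I would apply the scalar Young identity lifted to the trace,
\[
\Tr C^s=\sup_{R>0}\bigl\{s\Tr(CR^{s-1})-(s-1)\Tr R^s\bigr\},
\]
to $C=B^{q/2}K^*A^pKB^{q/2}$. Using cyclicity this rewrites as
\[
\Psi_{p,q,s}(A,B)=\sup_{R>0}\Bigl\{s\Tr\!\bigl(K^*A^pK\,B^{q/2}R^{s-1}B^{q/2}\bigr)-(s-1)\Tr R^s\Bigr\}.
\]
Since the rightmost term is constant in $(A,B)$, it suffices to show joint convexity of $(A,B)\mapsto\Tr(K^*A^pK\,B^{q/2}R^{s-1}B^{q/2})$ for each fixed $R>0$. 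To establish this, I would apply a second, dual, variational identity to the $B$-sandwich (using that $B\mapsto B^{-r}$ is operator convex for $r\in(0,1]$) in order to trade the exponent $q/2$ for $(1-p)/2$ modulo an $R$-dependent-but-$(A,B)$-independent correction. This re-expresses the integrand as an infimum over an auxiliary $S>0$ of $\Psi_{p,1-p,1}$-type quantities with a suitably modified $K$, whose joint convexity is the baseline for $p\in[1,2]$.

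For the remaining subcase $1<p+q<2$ (i.e., $1/2<s<1$) the function $\Tr C^s$ is concave in $C$, so the previous Young supremum cannot be applied directly. Here I would use the Schatten-norm duality $\|X\|_{2s}=\sup\{|\Tr(X^*V)|:\|V\|_{(2s)'}\le 1\}$ (valid since $2s>1$) applied to $X=A^{p/2}KB^{q/2}$, then raise to the $2s$-th power, and combine with the first variational step to land once again on $\Psi_{p,1-p,1}$-type integrands. The main obstacle is the second variational step: choosing the substitutions so that the inner trace decouples into a product of $A^p$ and $B^{1-p}$ sandwiched by a common $(A,B)$-independent auxiliary operator, exposing the $\Psi_{p,1-p,1}$ shape. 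Secondary care is required near the excluded corner $(p,q)=(1,-1)$, where $s\to\infty$ and the variational identity degenerates.
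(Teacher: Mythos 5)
There is a genuine gap, and it is located exactly at the step you label as sufficient. After applying the trace Young identity $\Tr C^s=\sup_{R>0}\{s\Tr(CR^{s-1})-(s-1)\Tr R^s\}$ you claim it suffices to prove joint convexity of $(A,B)\mapsto\Tr\bigl(K^*A^pK\,B^{q/2}R^{s-1}B^{q/2}\bigr)$ for each fixed $R>0$. That statement is false on most of the conjectured region. Take $R=I$: the inner functional is then exactly $\Psi_{p,q,1}(A,B)$, and by Hiai's necessary conditions (Proposition \ref{prop:ness for psi}) joint convexity of $\Psi_{p,q,s}$ in this parameter range forces $s\ge\frac{1}{p+q}$; when $p+q<1$ (your case $s>1$) the value $s=1$ violates this, so the fixed-$R$ functional is not jointly convex. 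Even more directly, for $p=1$ and $q\in(-1,0)$ the functional is linear in $A$ and genuinely non-affine in $B$; writing it as $\Tr(A\,G(B))$, the second derivative along $(A_0+tX,B_0+tY)$ is $2\Tr\bigl(X\,DG(B_0)[Y]\bigr)+\Tr\bigl(A_0\,D^2G(B_0)[Y,Y]\bigr)$, and choosing $X=-\lambda\,DG(B_0)[Y]$ with $\lambda$ large makes this negative, so no function of this shape is jointly convex. Your proposed rescue---a second, \emph{inner} variational identity expressing the integrand as an infimum over $S$ of $\Psi_{p,1-p,1}$-type quantities---cannot repair this: an infimum of functions that are merely convex in $(A,B)$ for each fixed $S$ need not be convex (Lemma \ref{lem:convex-concave}(2) requires joint convexity in $(S,A,B)$, which the $S$-modified kernels do not provide), and a sup-inf of convex functions carries no convexity in general; indeed, since the fixed-$R$ functional is provably non-convex, no representation of it can make the naive ``sup of convex pieces'' argument go through. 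The Schatten-duality treatment of $\tfrac12<s<1$ has the same defect: $\Tr(V^*A^{p/2}KB^{q/2})$ is not linear (nor obviously convex) in $(A,B)$, so raising the dual pairing to the power $2s$ does not produce convex envelopes.

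The structural point you are missing, and which the paper exploits, is that the exponent $s$ must not be stripped down to $1$ on the product; it must be \emph{redistributed} between the two variables. The paper applies the variational identity \eqref{equ:variational method max-convex} to $X=A^{p/2}K$, $Y=B^{q/2}$ with $(r_0,r_1,r_2)=(2t,2s,\tfrac{2}{-q})$, $\tfrac1t=\tfrac1s-q$, giving
\begin{equation*}
\Psi_{p,q,s}(A,B)=\max_{Z}\Bigl\{\tfrac{s}{t}\Tr\bigl(Z^*K^*A^pKZ\bigr)^{t}+sq\,\Tr\bigl(Z^*B^{-q}Z\bigr)^{\frac{1}{-q}}\Bigr\},
\end{equation*}
in which each summand depends on only one variable: the $A$-term is convex because $t\ge\frac1p$ (Theorem \ref{thm:one variable case}(3)), and the $B$-term is convex because $sq<0$ multiplies the Epstein-concave quantity $\Tr(Z^*B^{-q}Z)^{1/(-q)}$ (Theorem \ref{thm:one variable case}(1)). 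A supremum of such jointly convex functions is jointly convex by Lemma \ref{lem:convex-concave}(1). Your Young/Legendre step instead forces the $A$- and $B$-dependence into a single trace with effective exponent $1$, which is precisely the regime excluded by the necessary conditions; keeping a residual power $t\ge\frac1p$ on the $A$-block (and the negative-coefficient trick on the $B$-block) is what makes the variational method close.
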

We cheat a little bit here, since the original form of their conjecture concerns the convexity of $ A\mapsto \Tr(A^{\frac{q}{2}}K^*A^{p}KA^{\frac{q}{2}})^{\frac{1}{p+q}}$ for all $K$. However, by doubling dimension, a standard argument shows that they are equivalent. See the discussions after \cite[Conjecture 1]{CFL18conjecture} for example.

In this paper we confirm a stronger conjecture of Carlen, Frank and Lieb:

\begin{con}\label{conj:CFL}\cite[Conjecture 4]{CFL18conjecture}
	If $1\leq p\leq 2,~-1\leq q<0,~(p,q)\ne(1,-1)$ and $s\ge \frac{1}{p+q}$, then for any matrix $K$, the function
	\begin{equation*}
	\Psi_{p,q,s}(A,B)=\Tr(B^{\frac{q}{2}}K^*A^{p}KB^{\frac{q}{2}})^s,	
	\end{equation*}
	is jointly convex in $(A,B)$, where $A$ and $B$ are positive definite matrices.
\end{con}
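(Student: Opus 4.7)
The plan is to reduce Conjecture \ref{conj:CFL} to the joint convexity/concavity of the simpler family $\Psi_{p,1-p,1}(A,B)=\Tr K^*A^pKB^{1-p}$, which is jointly concave for $0\le p\le 1$ by Lieb's Concavity Theorem and jointly convex for $-1\le p\le 0$ by Ando's Theorem. Swapping the roles of $A$ and $B$ (and replacing $p$ by $1-p$) shows that $\Psi_{p,1-p,1}$ is also jointly convex for $1\le p\le 2$. This already settles the boundary case $s=1$ (so $p+q=1$, $p\in(1,2]$) of the Conjecture.

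For the critical exponent $s_0=1/(p+q)$, I would express $\Psi_{p,q,s_0}(A,B)$ through a variational formula as a supremum (or infimum) over an auxiliary positive definite matrix $R$ of an expression of the form $F_R(A,B)+g(R)$, where $F_R$ is jointly convex in $(A,B)$ for each fixed $R$ and $g(R)$ depends only on $R$. Concretely, I would use the $L^s$-$L^{s'}$ duality
\[
(\Tr X^s)^{1/s}=\sup_{Y\ge 0,\,\Tr Y^{s/(s-1)}\le 1}\Tr(XY)
\]
(valid when $s_0>1$, i.e.\ $p+q<1$), or its reverse-Hölder analog (when $s_0<1$, i.e.\ $p+q>1$), applied to $X=B^{q/2}K^*A^pKB^{q/2}$. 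After a change of variables $Y=B^{-q/2}ZB^{-q/2}$ the objective becomes $\Tr(K^*A^pKZ)$, and then a further substitution $Z=B^{q/2}RB^{q/2}$ (or a similar absorption of the $B^{q/2}$-sandwich into $R$) should align the $(A,B)$-dependent piece with a $\Psi_{p',1-p',1}$-type functional whose parameter $p'$ falls in the convex range $[1,2]$, while the constraint on $R$ becomes of the form $\Tr R^\beta\le 1$, independent of $(A,B)$, so that passing to the sup/inf preserves joint convexity.

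Once $s=s_0$ is settled, I would extend to $s\ge s_0$ by a standard monotonicity/composition argument: the functional at $s=s_0$ is nonnegative and convex, and composing with the convex increasing map $t\mapsto t^{s/s_0}$ (combined with an additional variational representation that writes $\Tr X^s$ as an envelope of $\Tr X^{s_0}$-type expressions) yields joint convexity for all $s\ge s_0$.

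The main obstacle is the construction in the second paragraph: finding the right variational representation and change of variables so that, uniformly across the three subcases $p+q<1$, $p+q=1$, and $p+q>1$, the fixed-$R$ functional reduces exactly to $\Psi_{p',1-p',1}$ with $p'$ in the convex range $[-1,0]\cup[1,2]$. The subcases $p=1$ (with $-1<q<0$) and $p+q=1$ with $p\in(1,2]$ sit at the interface of Lieb's and Ando's regimes and will likely require the most delicate bookkeeping; the exclusion $(p,q)\ne(1,-1)$ should correspond precisely to the degeneration $p+q=0$ where the exponent $s_0=1/(p+q)$ blows up and the variational scheme fails.
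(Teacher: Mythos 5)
There is a genuine gap at the heart of your plan, in the second paragraph. You dualize the \emph{outer} trace power while leaving the product $B^{q/2}K^*A^pKB^{q/2}$ intact, and your two substitutions are mutually inverse: $Y=B^{-q/2}ZB^{-q/2}$ followed by $Z=B^{q/2}RB^{q/2}$ just gives $Y=R$ back, so nothing is decoupled. Concretely, you face a dichotomy. If you keep the constraint $\Tr Y^{s/(s-1)}\le 1$ independent of $(A,B)$, the fixed-$Y$ objective is $\Tr\bigl(B^{q/2}K^*A^pKB^{q/2}Y\bigr)$, which is not of Lieb--Ando form and is in general \emph{not} jointly convex: already for $Y=I$ it equals $\Psi_{p,q,1}$, which by Proposition \ref{prop:ness for psi} fails to be jointly convex whenever $p+q<1$ (e.g.\ $p=1$, $q=-1/2$, where $s=1<\frac{1}{p+q}=2$) --- and $p+q<1$ is exactly the subcase where you invoke the supremum form of H\"older duality, so the supremum of these functions yields no conclusion. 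If instead you absorb the $B$'s into the objective (getting $\Tr K^*A^pKZ$), the constraint becomes $\Tr(B^{-q/2}ZB^{-q/2})^{s/(s-1)}\le 1$, which depends on $B$; then Lemma \ref{lem:convex-concave}(1) no longer applies, and controlling the penalty term requires joint convexity/concavity in $(B,Z)$ of a sandwiched trace power --- a problem of the same nature as the one you started with. The subcase $p+q>1$ is worse still: reverse H\"older gives an infimum representation, and an infimum preserves convexity only via joint convexity in all variables (Lemma \ref{lem:convex-concave}(2)), which you do not have. Finally, the extension step is also invalid as stated: $\Tr X^{s}\neq(\Tr X^{s_0})^{s/s_0}$, so composing $\Psi_{p,q,s_0}$ with $t\mapsto t^{s/s_0}$ gives convexity of $(\Psi_{p,q,s_0})^{s/s_0}$, not of $\Psi_{p,q,s}$; the ``additional variational representation'' you wave at is where the real work lies. (Your first paragraph, the $s=1$, $p+q=1$ slice via the $A\leftrightarrow B$ symmetry and Ando, is correct but is only a boundary case.)

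The fix is to split the product \emph{inside} the trace norm rather than dualizing the outer power, which is what the paper does. Writing $\Psi_{p,q,s}(A,B)=\Tr|A^{p/2}KB^{q/2}|^{2s}$ and applying the H\"older--Young variational identity \eqref{equ:variational method max-convex} with $(r_0,r_1,r_2)=(2t,2s,\frac{2}{-q})$, $\frac{1}{t}=\frac{1}{s}-q$, and $(X,Y)=(A^{p/2}K,B^{q/2})$, one gets $\Psi_{p,q,s}(A,B)=\max_{Z}\bigl\{\frac{s}{t}\Tr(Z^*K^*A^pKZ)^{t}+sq\Tr(Z^*B^{-q}Z)^{\frac{1}{-q}}\bigr\}$, where now, for each fixed invertible $Z$, the $A$-term and the $B$-term are genuinely separated one-variable trace functions: the first is convex because $s\ge\frac{1}{p+q}$ forces $t\ge\frac{1}{p}$ (Theorem \ref{thm:one variable case}(3)), and the second is a negative multiple of an Epstein-type concave functional at its critical exponent $\frac{1}{-q}$ (Theorem \ref{thm:one variable case}(1)), hence convex; Lemma \ref{lem:convex-concave}(1) then gives joint convexity for all $s\ge\frac{1}{p+q}$ in one stroke, with no separate extension step. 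The reduction to Lieb's and Ando's $\Psi_{p,1-p,1}$ results that you were aiming for is correct in spirit, but it happens one level down, through the one-variable functionals $\Upsilon_{p,s}$ (Steps 2 and 3 of the paper), not by forcing the two-variable functional directly into $\Psi_{p',1-p',1}$ form with an $(A,B)$-independent constraint.
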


\begin{figure}[h]
	\centering
	\begin{tikzpicture}[node distance=2cm]
	\draw[->](-4,0)--(7,0) node[right]{p};
	\draw[->](0,-4)--(0,7) node[above]{q};
	
	\coordinate [label=225:$o$](A) at (0,0);
	\coordinate (B) at (0,3);
	\coordinate (C) at (3,3);
	\coordinate (D) at (3,0);
	\fill[lime] (A) -- (B) -- (C) -- (D) -- cycle;
	\draw (A)--(B)--(C)--(D)--cycle;
	\draw (1.5,50pt) node[] {\textbf{concave}};
	\draw (1.5,35pt) node[] {for $0\le s \le \frac{1}{p+q} $};
	
	\coordinate (E) at (6,0);
	\coordinate (F) at (6,-3);
	\coordinate (G) at (3,-3);
	\fill[yellow] (D) -- (E) -- (F) -- (G) -- cycle;
	\draw(D)--(E)--(F)--(G)--cycle;
	\draw (4.5,-35pt) node[] {\textbf{convex}};
	\draw (4.5,-50pt) node[] {for $s \ge \frac{1}{p+q} $};

	\coordinate (H) at (0,-3);
	\coordinate (I) at (-3,-3);
	\coordinate (J) at (-3,0);
	\fill[yellow] (A) -- (H) -- (I) -- (J) -- cycle;
	\draw(A)--(H)--(I)--(J)--cycle;
	\draw (-1.5,-35pt) node[] {\textbf{convex}};
	\draw (-1.5,-50pt) node[] {for $s \geq 0 $};
	
	\coordinate (K) at (-3,3);
	\coordinate (L) at (-3,6);
	\coordinate (M) at (0,6);
	\fill[yellow] (B) -- (K) -- (L) -- (M) -- cycle;
	\draw(B)--(K)--(L)--(M)--cycle;
	\draw (-1.5,135pt) node[] {\textbf{convex}};
	\draw (-1.5,120pt) node[] {for $s \ge \frac{1}{p+q} $};
	
	\draw (-3,0) node[above] {-1};
	\draw (0,-3) node[right] {-1};
	\draw (3.2,0) node[above] {1};
	\draw (6,0) node[above] {2};
	\draw (0,3.2) node[right] {1};
	\draw (0,6) node[right] {2};
	
	\draw[dashed] (-2.5,-2.5)--(-3.5,-3.5);
	\draw[dashed] (-0.5,-0.5)--(0.5,0.5);
	\draw[dashed] (2.5,2.5)--(6.5,6.5) node[above] {q=p};

	\end{tikzpicture}
	\caption{Joint convexity/concavity (for all $K$) of $\Psi_{p,q,s}$}\label{figure:convex/concave}
\end{figure}
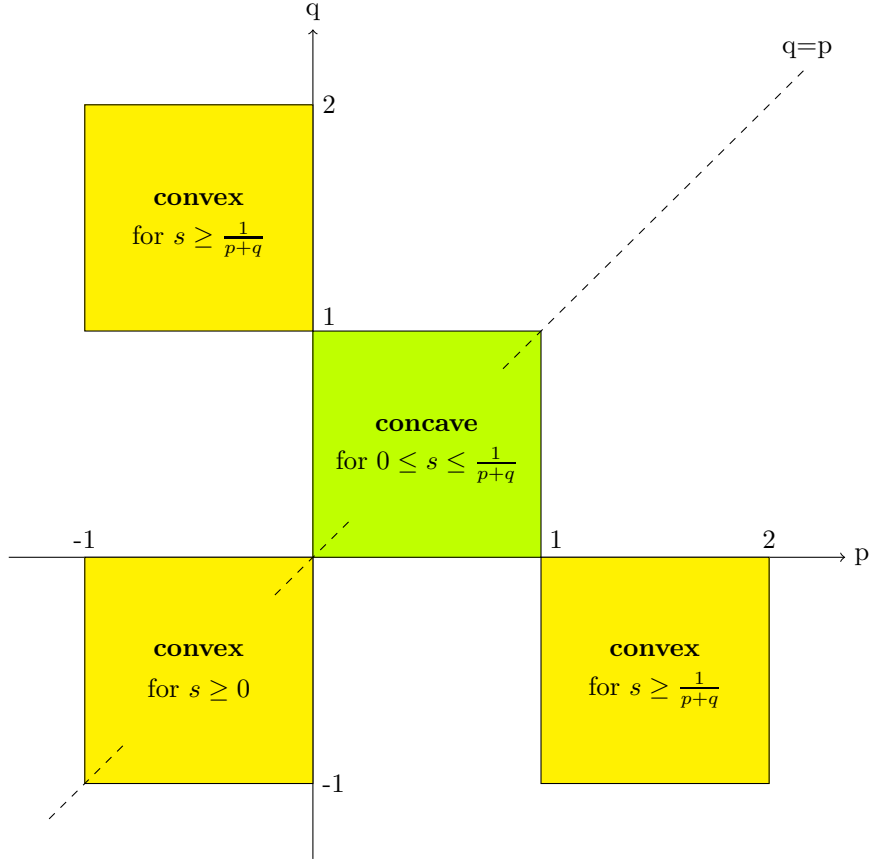

Consequently, we give the full range of $(p,q,s)$ for $\Psi_{p,q,s}$ to be jointly convex or jointly concave for any invertible $K$. See Figure \ref{figure:convex/concave} (note that $(1,-1)$ and $(-1,1)$ do not belong to the area of convexity) and the following

\begin{thm}\label{thm:A}
	Fix any invertible matrix $K$. Suppose that $p\geq q$ and $s>0$. Then $\Psi_{p,q,s}$ defined in \eqref{equ:defn of Psi_p,q,s} is
	\begin{enumerate}
		\item jointly concave if $0\leq q\leq p\leq1$ and $0<s\leq\frac{1}{p+q}$;
		\item jointly convex if $-1\leq q\leq p\leq 0$ and $s>0$;
		\item jointly convex if $-1\leq q\leq0,~1\leq p\leq 2,~(p,q)\ne(1,-1)$ and $s\geq\frac{1}{p+q}$.
	\end{enumerate}
\end{thm}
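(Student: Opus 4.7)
The plan is to reduce all three cases of Theorem~\ref{thm:A} to the joint convexity/concavity of the base trace functionals
\[
\Psi_{p,1-p,1}(A,B)=\Tr K^* A^p K B^{1-p},\qquad p\in[-1,1],
\]
by a variational argument. These base cases are classical: Lieb's Concavity Theorem gives joint concavity for $p\in[0,1]$, and Ando's Convexity Theorem gives joint convexity for $p\in[-1,0]$. By the trace-cyclicity swap $(A,B,K)\mapsto(B,A,K^*)$, which has the effect of exchanging $p$ with $q=1-p$ on the base family, we also obtain joint convexity for $p\in[1,2]$. For each of the three cases of Theorem~\ref{thm:A}, after possibly applying this swap, the relevant limiting slice $s=1$ or $s=1/(p+q)$ falls into one of these base regions: Case (1) at $s=1$ sits inside the Lieb region, Case (2) at $s=1$ sits in the Ando quadrant $p,q\in[-1,0]$, and Case (3) at $s=1/(p+q)$ with $p+q=1$ sits on the Ando segment $p\in[1,2]$, $q\in[-1,0]$.

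The variational method is the means to promote the base case to general $s$. The template is to write, using an appropriate Legendre/Young-type identity,
\[
\Psi_{p,q,s}(A,B)=\sup_{Y>0}\bigl(\Psi_{p,q',1}(A,B;K_Y)-\Phi(B,Y)\bigr)\quad\text{or}\quad\inf_{Y>0}\bigl(\cdots\bigr),
\]
where the coupled term is of base type (with $K$ replaced by a $Y$-dependent twist $K_Y$, and a new second exponent $q'$ determined by $s$, $q$, and $Y$) while $\Phi$ is a penalty depending on $B$ and $Y$ alone. Matching the exponent pair $(p,q')$ to the base line $p+q'=1$, and matching the sup/inf orientation to the convexity/concavity type, shows that the hypothesis $s\le 1/(p+q)$ in Case (1), or $s\ge 1/(p+q)$ in Cases (2) and (3), is exactly what forces $p$ into the base interval $[-1,1]$. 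Since pointwise suprema of jointly convex functions are jointly convex and pointwise infima of jointly concave functions are jointly concave, the desired joint convexity/concavity of $\Psi_{p,q,s}$ follows.

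The main obstacle will be constructing the correct variational identity: one must choose the auxiliary variable $Y$ and the associated change of variables so that the $A$-dependence collapses into a single base-case trace while the $B$-dependence cleanly splits between the base factor and a penalty $\Phi$ that is itself convex or concave in $B$ with the right orientation. A secondary subtlety concerns the boundary point $(p,q)=(1,-1)$ excluded in Case (3): there $p+q=0$, the threshold $1/(p+q)$ is singular, and the variational identity degenerates, reflecting the fact that this point is genuinely outside the convexity region.
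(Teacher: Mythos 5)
Your overall aspiration---reduce everything to Lieb/Ando for $\Psi_{p,1-p,1}$ by a variational (Legendre/Young) argument, with min for concavity and max for convexity---is exactly the philosophy of the paper, and your observation that the swap $(A,B,K)\mapsto(B,A,K^*)$ extends Ando's base case to $A$-exponents in $[1,2]$ is correct. But the proof has a genuine gap at its central point: the variational identity
\[
\Psi_{p,q,s}(A,B)=\sup_{Y>0}\bigl(\Psi_{p,q',1}(A,B;K_Y)-\Phi(B,Y)\bigr)\quad\text{(or an infimum version)}
\]
is never constructed, and the natural candidates fail. If you try to produce such an identity from H\"older--Young splittings of $\Tr|A^{p/2}KB^{q/2}|^{2s}$ while keeping $A$ and $B$ coupled in one trace (for instance, writing $\Tr C^{s}=\min_{W>0}\{s\Tr CW+(1-s)\Tr W^{-s/(1-s)}\}$ with $C=B^{q/2}K^*A^pKB^{q/2}$, or factoring $X=A^{p/2}KB^{(1-p)/2}$, $Y=B^{(p+q-1)/2}$ in the two-term variational formula), the auxiliary variable ends up sandwiched \emph{between} split powers of $B$: for each fixed $W\ge0$ the coupled term has the form $\Tr K^*A^pK\,B^{r/2}WB^{r/2}$, which is \emph{not} of the form $\Tr \widetilde K^*A^pK\widetilde{\phantom{K}}\!B^{r}$ for any fixed $\widetilde K$, so Lieb/Ando does not apply to it; the convexity/concavity of such ``inserted'' trace functions is precisely the open obstruction discussed at the end of the paper (and by Carlen--Frank--Lieb, closely related three-variable functions are never concave). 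Your template tacitly assumes the twist $K_Y$ is independent of $B$ while the penalty depends only on $(B,Y)$, but then there is no mechanism by which the optimization can transmute the $B^{q}$-sandwich of the left-hand side into the $B^{1-p}$-dependence of the base term; the claim that ``exponent matching shows the hypothesis on $s$ forces $p\in[-1,1]$'' is asserted, not computed, and the case $-1\le q\le p\le 0$ with arbitrary $s>0$ (where $1/(p+q)$ imposes no bound) is not addressed at all.

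The paper circumvents exactly this obstruction by a different two-stage reduction. First (Step 1) it applies the variational formulas so as to \emph{decouple $A$ from $B$ completely}: for each fixed $Z$ the expression is a sum $\Tr(Z^*K^*A^pKZ)^{t_1}+\Tr(Z^{-1}B^{q'}(Z^{-1})^*)^{t_2}$ of two one-variable traces, so only separate convexity/concavity in $A$ and in $B$ is needed and the sup/inf lemma applies immediately; this reduces Theorem \ref{thm:A} to the one-variable functions $\Upsilon_{p,s}(A)=\Tr(K^*A^pK)^s$. Only afterwards (Steps 2--3) is the ``keep the base functional coupled'' trick used, and there it is safe because the original function has a single variable, so the auxiliary $Z$ can occupy the vacant second slot of $\Tr K^*A^pKZ^{1-p}$ without colliding with any existing $B$-dependence; joint convexity/concavity in $(A,Z)$ is then exactly Lieb/Ando, and Lemma \ref{lem:convex-concave}(2) finishes. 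To repair your argument you would either have to prove the missing convexity/concavity of the inserted functions $(A,B)\mapsto\Tr K^*A^pKB^{r/2}WB^{r/2}$ for fixed $W$ (not available, and related statements are false), or restructure the reduction along the decoupled lines above---which is the paper's proof.
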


We remark here that the symmetric property of $\Psi_{p,q,s}$ allows us to assume $p\geq q$ and $s>0$. See the discussions before Proposition \ref{prop:known results on psi}. Moreover, the above result is sharp, in view of Proposition \ref{prop:ness for psi}.

\smallskip
As a corollary of Theorem \ref{thm:A}, Proposition \ref{prop:characterization of DPI} and Proposition \ref{prop:ness for psi}, we obtain all $(\a,z)$ such that $D_{\a,z}$ is monotone under completely positive trace preserving maps (or satisfies Data Processing Inequality, see \eqref{ineq:DPI} for the precise definition). 

\begin{thm}
	The $\a$-$z$ relative R\'enyi entropy $D_{\a,z}$ is monotone under completely positive trace preserving maps if and only if one of the following holds
	\begin{enumerate}
		\item $0<\a<1$ and $z\geq\max\{\a,1-\a\}$;
		\item $1<\a\leq 2$ and $\frac{\a}{2}\leq z\leq\a$;
		\item $2\leq\a<\infty$ and $\a-1\leq z\leq\a$.
	\end{enumerate}
\end{thm}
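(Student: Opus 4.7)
The plan is to combine the DPI-characterization (Proposition~\ref{prop:characterization of DPI}), the sufficiency (Theorem~\ref{thm:A}), and the necessity (Proposition~\ref{prop:ness for psi}). Starting from the standard definition
\[
D_{\alpha,z}(\rho\|\sigma)=\frac{1}{\alpha-1}\log\Tr\bigl(\sigma^{(1-\alpha)/(2z)}\rho^{\alpha/z}\sigma^{(1-\alpha)/(2z)}\bigr)^{z},
\]
Proposition~\ref{prop:characterization of DPI} tells us that $D_{\alpha,z}$ obeys DPI if and only if $\Psi_{p,q,s}$ (with $K$ drawn from the family arising from a Stinespring dilation) is jointly concave for $0<\alpha<1$ and jointly convex for $\alpha>1$, under the change of variables
\[
p=\frac{\alpha}{z},\qquad q=\frac{1-\alpha}{z},\qquad s=z=\frac{1}{p+q}.
\]

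For the sufficiency I would verify case by case that the constraints on $(\alpha,z)$ translate exactly into the corresponding region in Theorem~\ref{thm:A}. In case (1), $0<\alpha<1$ with $z\ge\max\{\alpha,1-\alpha\}$ forces $0<p,q\le1$ and $s=1/(p+q)$, which (after a possible swap $(p,q)\leftrightarrow(q,p)$, legitimate by the symmetry of $\Psi_{p,q,s}$ mentioned before Proposition~\ref{prop:known results on psi}) lies in the concavity region of Theorem~\ref{thm:A}(1). In case (2), $1<\alpha\le2$ with $\alpha/2\le z\le\alpha$ yields $1\le p\le2$, $-1\le q<0$, and $s=1/(p+q)$; the excluded point $(p,q)=(1,-1)$ would simultaneously require $z=\alpha$ and $z=\alpha-1$, which is impossible, so Theorem~\ref{thm:A}(3) applies. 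Case (3), $\alpha\ge2$ with $\alpha-1\le z\le\alpha$, is parallel: once $\alpha\ge2$ the binding lower bound on $z$ switches from $\alpha/2$ to $\alpha-1$, which is precisely the condition $q\ge-1$ in Theorem~\ref{thm:A}(3). Thus in every listed case the required joint convexity or concavity is provided by Theorem~\ref{thm:A}, and DPI for $D_{\alpha,z}$ follows.

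For the necessity I would argue by contrapositive. If $(\alpha,z)$ lies outside the three regions listed, then $(p,q,s)=(\alpha/z,(1-\alpha)/z,z)$ violates the constraints in Theorem~\ref{thm:A}: either $\max\{p,q\}>1$ in the concave regime, or $p<1$, or $q<-1$, or $p>2$ in the convex regime. Proposition~\ref{prop:ness for psi} then furnishes explicit positive definite $A$, $B$ and an invertible $K$ violating the needed convexity/concavity of $\Psi_{p,q,s}$, and pushing this example back through Proposition~\ref{prop:characterization of DPI} produces a CPTP map together with a pair of states violating DPI for $D_{\alpha,z}$.

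The proof is essentially a change-of-variables bookkeeping exercise, so there is no serious technical obstacle; the delicate points are to verify that the prohibited point $(p,q)=(1,-1)$ corresponds to no admissible finite $(\alpha,z)$, to confirm that the lower bound on $z$ correctly switches from $\alpha/2$ to $\alpha-1$ exactly at $\alpha=2$, and to handle the symmetry $(p,q)\leftrightarrow(q,p)$ when $\alpha<1/2$. The real mathematical content is carried by Theorem~\ref{thm:A} and Proposition~\ref{prop:ness for psi}, both proved in the earlier sections.
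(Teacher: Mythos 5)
Your proposal is correct and follows essentially the same route as the paper, which obtains this theorem precisely as a corollary of Proposition~\ref{prop:characterization of DPI}, Theorem~\ref{thm:A} and Proposition~\ref{prop:ness for psi} via the substitution $p=\alpha/z$, $q=(1-\alpha)/z$, $s=z=1/(p+q)$; your case-by-case bookkeeping (including the impossibility of $(p,q)=(1,-1)$ and the switch of the binding bound at $\alpha=2$) is exactly the verification the paper leaves implicit. One small point of precision: in Proposition~\ref{prop:characterization of DPI} the relevant convexity/concavity is for $\Psi_{p,q,1/(p+q)}$ with $K=I$ only (the Stinespring dilation enters the proof of that proposition, not the choice of $K$), which is harmless here since Theorem~\ref{thm:A} covers all invertible $K$.
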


As we mentioned earlier, in the history two main methods have been developed to study the convexity/concavity of the trace functions $\Psi_{p,q,s}$: the analytic method and the variational method. The analytic method, which is the methodology employing the theory of Herglotz functions, was first introduced by Epstein \cite{Epstein73}. The variational method was first used by Carlen and Lieb in \cite{CL08minkowski-II}. Both of them have their own advantages, as the authors wrote in \cite[Page 8]{CFL18conjecture}: ``It appears that the analyticity method is especially useful for proving concavity and the variational method is more useful for proving convexity, but this is not meant to be an absolute distinction.'' In this paper we confirm Conjecture \ref{conj:CFL} by developing only the variational method.

\smallskip

The main value of this paper is twofold. Firstly, we develop the variational method in a very simple way such that it is useful to prove both convexity and concavity, and it reduces the convexity/concavity of $\Psi_{p,q,s}$ to three very particular cases, which were already known (see Theorem \ref{thm:reduction of convexity/concavity}). In this way we obtain the full range of $(p,q,s)$ such that $\Psi_{p,q,s}$ is jointly convex/concave and confirm Conjecture \ref{conj:Audenaert-Datta} and Conjecture \ref{conj:CFL}. Secondly, using our variational method in a slightly different way, we can furthermore reduce these three very particular cases to Lieb's concavity result \cite{Lieb73WYD} of $\Psi_{p,1-p,1}$ for $0< p\le 1$ and Ando's convexity result \cite{Ando79} of $\Psi_{p,1-p,1}$ for $-1\le p<0$. In other words, from Lieb's and Ando's classical joint convexity/concavity results (which admit many simple proofs) on 
$$\Psi_{p,1-p,1}(A,B)=\Tr K^*A^p K B^{1-p},$$
the subsequent results on joint convexity/concavity of $\Psi_{p,q,s}$ can be derived easily via our variational method. In this way we recover many classical results immediately. Moreover, we emphasize here that the analytic method can be avoided. 

In the past half a century we have developed a lot of tools to tackle the convexity/concavity of trace functions, and have witnessed a number of applications of the convexity/concavity of trace functions to many areas, like mathematical physics and quantum information. Now our variational method helps us to reduce the Carlen-Frank-Lieb conjecture (in fact the joint convexity/concavity of the whole family $\Psi_{p,q,s}$) to the convexity/concavity of the trace function \eqref{eq:skew information} (in which the essential part is $\Psi_{p,1-p,1}$) in the Wigner-Yanase-Dyson conjecture. This brings us back to the origin of the whole story.

\smallskip
This paper is organized as follows. In Section \ref{sect:backgroud of quantum information} we recall the background of Conjecture \ref{conj:Audenaert-Datta} and Conjecture \ref{conj:CFL}. In Section \ref{sect:proofs} we give the proof of our main result Theorem \ref{thm:A}.
\smallskip

We fix some notations in this paper. We use $\h$ to denote a finite-dimensional Hilbert space. We use $\bh$ to denote the family of bounded linear operators on $\h$, $\ph$ to denote the family of positive linear operators on $\h$ (or $n$-by-$n$ positive semi-definite matrices with $\dim\h=n$), and $\dh$ to denote the family of density operators, i.e., positive linear operators on $\h$ with unit trace (or $n$-by-$n$ positive semi-definite matrices having unit trace with $\dim\h=n$). Moreover, we use $\bhx$ (reps. $\phx$ and $\dhx$) to denote the family of invertible operators in $\bh$ (resp. $\ph$ and $\dh$). We use $\Tr$ to denote the usual trace on matrix algebra and we use $I$ to denote the identity matrix. For any matrix $A$ we use $|A|$ to denote its modulus $(A^*A)^{\frac{1}{2}}$.
\smallskip

We close this section with a remark. In this paper we are mainly dealing with the invertible matrices, to avoid some technical problems and make the paper more readable. In this case for $A\in \ph^{\times}$ and $\a\in\real$, $A^{\a}$ is always well-defined. Some results in this paper are still valid in the non-invertible case, by using an approximation argument. For example, in Conjectures \ref{conj:Audenaert-Datta} and \ref{conj:CFL}, $K$ is not assumed to be invertible, since $X^s$ is always well-defined for positive semi-definite $X$ and $s>0$. When $K$ is not invertible, one can approximate $K$ with invertible $K_\epsilon=K+\epsilon I$, where $\epsilon>0$ is small enough. Then the convexity of $\Tr(B^{\frac{q}{2}}K^*A^{p}KB^{\frac{q}{2}})^s$, which is the limit of $\Tr(B^{\frac{q}{2}}K_\epsilon^*A^{p}K_\epsilon B^{\frac{q}{2}})^s$ as $\epsilon$ tends to 0, follows from that of $\Tr(B^{\frac{q}{2}}K_\epsilon^*A^{p}K_\epsilon B^{\frac{q}{2}})^s$, since the convexity is stable under taking limits. 



\section{Background}\label{sect:backgroud of quantum information}
In this section we collect necessary background information for this paper. Most of them are borrowed from the survey paper \cite{CFL18conjecture}. One can refer to \cite{CFL18conjecture} and the references therein for further details. Experts may skip this section without any difficulty.

\smallskip
Given two probability density functions $P$ and $Q$ on $\mathbb{R}$, the \emph{relative entropy}, or \emph{Kullback-Leibler divergence} of $P$ with respect to $Q$ is given by 
\begin{equation}\label{equ:classical relative entropy}
S(P||Q):=\int_{\mathbb{R}}P(x)(\log P(x)-\log Q(x))dx.
\end{equation}
For $\a\in(0,1)\cup(1,\infty)$, the \emph{$\a$-R\'enyi relative entropy} of $P$ with respect to $Q$ is defined as \cite{Renyi61}
\begin{equation}\label{equ:classcial alpha relative entropy}
S_{\a}(P||Q):=\frac{1}{\a-1}\log\int_{\mathbb{R}}P(x)^{\a}Q(x)^{1-\a}dx.
\end{equation}

Both classical relative entropies \eqref{equ:classical relative entropy} and \eqref{equ:classcial alpha relative entropy} have been generalized to quantum setting, where the density functions are replaced by the density operators, and the integral is replaced by the trace, respectively. However, their quantum analogues might take various forms.
\smallskip

Fix $\rho,\sigma\in\dh^{\times}$ with $\h$ being any finite-dimensional Hilbert space. A natural quantum analogue of \eqref{equ:classical relative entropy}, is the so-called \emph{Umegaki relative entropy} \cite{Umegaki62}
\begin{equation}\label{equ:Umegaki relative entropy}
D(\rho||\sigma):=\Tr\rho(\log\rho-\log\sigma).
\end{equation}
It is monotone under completely positive trace preserving (CPTP) maps \cite{Lindblad75}. That is,
\begin{equation}\label{ineq:DPI for D}
D(\mathcal{E}(\rho)||\mathcal{E}(\sigma))\leq D(\rho||\sigma),
\end{equation}
for all CPTP maps $\mathcal{E}:\bh\to\bh$ and all density operators $\rho,\sigma\in\dhx$. 

The inequality \eqref{ineq:DPI for D} is known as the \emph{Data Processing Inequality} (DPI). As one of the most fundamental inequalities in quantum information, DPI has strong links with the Strong Subadditivity (SSA) of the von Neumann entropy \cite{LR73SSA}, the uncertainty principle \cite{TR2011}, the quantum hypothesis testing \cite{MO15hypothesis} and the Holevo bound for the accessible information \cite{Holevo73}. Not every quantum analogue of \eqref{equ:classical relative entropy} satisfies DPI. For example, it is known that \cite{CL18some}
\begin{equation*}
D'(\rho||\sigma):=\Tr\rho\log(\sigma^{-\frac{1}{2}}\rho\sigma^{-\frac{1}{2}}),
\end{equation*}
as a generalization of \eqref{equ:classical relative entropy}, does not satisfy DPI.

A natural generalization of \eqref{equ:classcial alpha relative entropy} is the family of \emph{quantum $\a$-R\'enyi relative entropies}
\begin{equation*}
D_{\a}(\rho||\sigma):=\frac{1}{\a-1}\log\Tr(\rho^{\a}\sigma^{1-\a}),~~ \a\in(0,1)\cup(1,\infty).
\end{equation*}

Another important generalization of \eqref{equ:classcial alpha relative entropy}, introduced by M\"uller-Lennert, Dupuis, Szehr, Fehr, Tomamichel \cite{MDSFT13sandwich} and Wilde, Winter, Yang \cite{WWY14sandwich}, are the \emph{sandwiched $\a$-R\'enyi entropies}:
\begin{equation*}
\widetilde{D}_{\a}(\rho||\sigma):=\frac{1}{\a-1}\log\Tr(\sigma^{\frac{1-\a}{2\a}}\rho\sigma^{\frac{1-\a}{2\a}})^{\a},~~\a\in(0,1)\cup(1,\infty).
\end{equation*}

Audenaert and Datta \cite{AD15alpha-z} introduced a new family of quantum R\'enyi relative entropies by using two parameters, called the \emph{$\a$-$z$ R\'enyi relative entropies}:
\begin{equation}\label{equ:alpha-z entropy}
D_{\a,z}(\rho||\sigma):=\frac{1}{\a-1}\log\Tr(\sigma^{\frac{1-\a}{2z}}\rho^{\frac{\a}{z}}\sigma^{\frac{1-\a}{2z}})^{z},~~\a\in(-\infty,1)\cup(1,\infty),~~z>0.
\end{equation} 
It unifies $D_{\a}$ and $\widetilde{D}_{\a}$ by taking $z=1$ and $z=\a$, respectively. We comment here that the $\a$-$z$ R\'enyi relative entropies have appeared earlier in a paper by Jaksic, Ogata, Pautrat and Pillet \cite{JOPP12entropic}.

\smallskip

A natural question is, for which $(\a,z)$ does the $\a$-$z$ R\'enyi relative entropy $D_{\a,z}$ satisfy DPI, that is,
\begin{equation}\label{ineq:DPI}
D_{\a,z}(\mathcal{E}(\rho)||\mathcal{E}(\sigma))\leq D_{\a,z}(\rho||\sigma),
\end{equation}
for any CPTP map $\mathcal{E}$ on $\bh$ and all density operators $\rho,\sigma\in\dh^{\times}$? This remained open for some range of $(\a,z)$ before the present paper. It is well-known that DPI is essentially equivalent to the joint convexity/concavity of the trace functions inside the definition of $D_{\a,z}$.

\begin{prop}\label{prop:characterization of DPI}\cite[Proposition 7]{CFL18conjecture}
	Let $\a\in(-\infty,1)\cup(1,\infty)$ and $z>0$. Set $p=\frac{\a}{z}$ and $q=\frac{1-\a}{z}$. Then \eqref{ineq:DPI} holds for any CPTP map $\mathcal{E}:\bh\to \bh$, all density operators $\rho,\sigma\in\dh^{\times}$ and any finite-dimensional Hilbert space $\h$ if and only if one of the following holds
	\begin{enumerate}
		\item $\a<1$ and $\Psi_{p,q,1/(p+q)}$ with $K=I$ is jointly concave;
		\item $\a>1$ and $\Psi_{p,q,1/(p+q)}$ with $K=I$ is jointly convex.
	\end{enumerate}
\end{prop}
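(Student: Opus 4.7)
The plan is to establish the well-known equivalence between DPI and joint convexity/concavity of the underlying trace quantity, adapted to the family $\Psi_{p,q,1/(p+q)}$ with $K=I$. First, since $p+q=\alpha/z+(1-\alpha)/z=1/z$, we have
\[
Q_{\alpha,z}(\rho,\sigma):=\Tr\bigl(\sigma^{q/2}\rho^{p}\sigma^{q/2}\bigr)^{z}=\Psi_{p,q,1/(p+q)}(\rho,\sigma)\bigr|_{K=I},
\]
which is precisely the quantity inside the logarithm in \eqref{equ:alpha-z entropy}. Because $x\mapsto\tfrac{1}{\alpha-1}\log x$ is strictly increasing for $\alpha>1$ and strictly decreasing for $\alpha<1$, the DPI \eqref{ineq:DPI} is equivalent to $Q_{\alpha,z}(\mathcal{E}(\rho),\mathcal{E}(\sigma))\leq Q_{\alpha,z}(\rho,\sigma)$ when $\alpha>1$ and to the reverse inequality when $\alpha<1$. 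The task becomes showing that these monotonicity statements (quantified over all CPTP $\mathcal{E}$ and all $\rho,\sigma\in\dhx$) are equivalent to joint convexity (resp.\ concavity) of $Q_{\alpha,z}$ on $\phx\times\phx$.

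For the ``$\Leftarrow$'' direction I would invoke the Stinespring dilation $\mathcal{E}(\,\cdot\,)=\Tr_{E}\bigl(U(\,\cdot\,\otimes|0\rangle\langle0|)U^{*}\bigr)$, where $U$ is unitary on $\h\otimes\h_{E}$ and $|0\rangle\in\h_{E}$ is a fixed ancilla. Unitary invariance of the trace combined with $(UXU^{*})^{r}=UX^{r}U^{*}$ shows that $Q_{\alpha,z}$ is invariant under unitary conjugation, and restricting to the supporting subspace $\h\otimes\mathbb{C}|0\rangle$ gives $Q_{\alpha,z}(\rho\otimes|0\rangle\langle0|,\sigma\otimes|0\rangle\langle0|)=Q_{\alpha,z}(\rho,\sigma)$. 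Thus it suffices to verify monotonicity under the partial trace $\Tr_{E}$. This last step follows from joint convexity (resp.\ concavity) via the standard unitary-averaging identity
\[
\Tr_{E}(X)\otimes\frac{I_{E}}{d_{E}}=\frac{1}{d_{E}^{2}}\sum_{i}(I_{\h}\otimes W_{i})\,X\,(I_{\h}\otimes W_{i}^{*}),
\]
where $\{W_{i}\}$ is an orthonormal unitary basis of the matrix algebra on $\h_{E}$, combined once more with unitary invariance of $Q_{\alpha,z}$.

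For the ``$\Rightarrow$'' direction I would use the block-diagonal trick. Given $A_{1},A_{2},B_{1},B_{2}\in\phx$ and $\mu\in(0,1)$, set
\[
\tilde{\rho}=\mu A_{1}\oplus(1-\mu)A_{2},\qquad\tilde{\sigma}=\mu B_{1}\oplus(1-\mu)B_{2}
\]
on $\mathbb{C}^{2}\otimes\h$. Block-diagonal functional calculus together with the homogeneity $Q_{\alpha,z}(\lambda A,\lambda B)=\lambda Q_{\alpha,z}(A,B)$ (of degree $z(p+q)=1$) yields $Q_{\alpha,z}(\tilde{\rho},\tilde{\sigma})=\mu Q_{\alpha,z}(A_{1},B_{1})+(1-\mu)Q_{\alpha,z}(A_{2},B_{2})$, and the partial trace $\Tr_{\mathbb{C}^{2}}$ is CPTP, sending $(\tilde{\rho},\tilde{\sigma})$ to $(\mu A_{1}+(1-\mu)A_{2},\,\mu B_{1}+(1-\mu)B_{2})$. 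After rescaling $\tilde{\rho},\tilde{\sigma}$ to unit trace (harmless by the same degree-$1$ homogeneity), the assumed DPI delivers joint convexity for $\alpha>1$ and joint concavity for $\alpha<1$.

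The main point of care throughout is that $p$ or $q$ may be negative, so ancilla extensions like $\rho\otimes|0\rangle\langle0|$ are rank-deficient and powers with negative exponents are strictly speaking undefined; this is resolved by working on the supporting subspace (or by a $\rho\otimes(|0\rangle\langle0|+\varepsilon I_{E})$ regularization followed by $\varepsilon\downarrow 0$) and is the only nontrivial subtlety in an otherwise routine application of the CPTP-monotonicity/joint-convexity correspondence.
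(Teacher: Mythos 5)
Your proposal is correct and follows essentially the same route as the paper's proof: the standard Lindblad--Uhlmann argument, i.e.\ joint convexity/concavity plus unitary invariance and the tensor property yield monotonicity through a dilation and an averaging identity that realizes $\Tr_E(\cdot)\otimes I_E/d_E$, while the converse uses block-diagonal states and a partial-trace-type CPTP map together with additivity over direct sums and homogeneity. The only differences are cosmetic --- you use a discrete twirl over an orthonormal unitary basis and a pure Stinespring ancilla (handling the resulting rank-deficiency by support restriction or $\varepsilon$-regularization, a point the paper sidesteps via its mixed ancilla and its general approximation remark), whereas the paper averages over the Haar measure and uses the pinching map \eqref{equ:CPTP map} with normalized blocks; note also that undoing your trace normalization uses the separate homogeneities $Q(\lambda A,\mu B)=\lambda^{ps}\mu^{qs}Q(A,B)$ rather than joint degree-$1$ homogeneity alone.
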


For the reader's convenience, we present its proof in the end of this section. From some known results on the joint convexity/concavity of $\Psi_{p,q,1/(p+q)}$ with $K=I$, Audenaert and Datta obtained DPI for $D_{\a,z}$ for some---but not full---range of $(\a,z)$ \cite[Theorem 1]{AD15alpha-z}. By saying full we mean necessary and sufficient conditions on $(\a,z)$. It is then natural to ask whether DPI holds for the remaining range of $(\a,z)$. This motivated Audenaert and Datta to raise Conjecture \ref{conj:Audenaert-Datta}.

\smallskip

More generally, consider the joint convexity/concavity of trace functions
\[
\Psi_{p,q,s}(A,B)=\Tr(B^{\frac{q}{2}}K^*A^{p}KB^{\frac{q}{2}})^s,
\]
where $A,B\in \phx$, $K\in \bhx$ and $p,q,s\in\mathbb{R}$. Note that $\Psi_{q,p,s}(B,A)=\Psi_{p,q,s}(A,B)$ with $K$ replaced by $K^*$, and $\Psi_{-p,-q,-s}(A,B)=\Psi_{p,q,s}(A,B)$ with $K$ replaced by $(K^{-1})^*$. So in the sequel we assume that $p\geq q$ and $s>0$.

The knowledge of the joint convexity/concavity of $\Psi_{p,q,s}$ before the survey paper \cite{CFL18conjecture} is summarized in the following proposition  in \cite{CFL18conjecture} or the figure therein.

\begin{prop}\label{prop:known results on psi}\cite[Theorem 2]{CFL18conjecture}
	Fix $K\in \bhx$. Then $\Psi_{p,q,s}$ is
	\begin{enumerate}
		\item jointly concave if $0\leq q\leq p\leq1$ and $0<s\leq\frac{1}{p+q}$;
		\item jointly convex if $-1\leq q\leq p\leq 0$ and $s>0$;
		\item jointly convex if $-1\leq q\leq0,~1\leq p<2,~(p,q)\ne(1,-1)$ and $s\geq\min\{\frac{1}{p-1},\frac{1}{q+1}\}$ or $p=2,~-1\leq q\leq0$ and $s\geq\frac{1}{q+2}$.
	\end{enumerate}
\end{prop}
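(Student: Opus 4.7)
Since Proposition \ref{prop:known results on psi} is quoted from the survey \cite{CFL18conjecture} and compiles classical results on $\Psi_{p,q,s}$ scattered across the literature, my plan is not to produce a unified new argument but to identify, for each triple $(p,q,s)$ in the three stated regions, a previously established theorem that delivers the required joint convexity or concavity. The three items correspond to three historically distinct strands of work, and the verification is essentially a bookkeeping task of matching parameter subregions to known theorems.

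For item (1), the two cornerstone inputs are Lieb's concavity theorem \cite{Lieb73WYD}, which establishes concavity of $\Psi_{p,q,1}$ for $0\le p,q$ with $p+q\le 1$, and Epstein's theorem \cite{Epstein73}, which covers $\Psi_{p,0,1/p}$ for $0<p\le 1$. Intermediate values of $s$ and the portion of the rectangle with $p+q>1$ would then be filled in by Hiai's extensions \cite{Hiai13concavity-I, Hiai16concavity-II}, which propagate concavity via a variational argument. For item (2), I would start from Ando's convexity theorem \cite{Ando79} for $\Psi_{p,1-p,1}$ with $-1\le p<0$ and use Hiai's work \cite{Hiai13concavity-I} to extend to the full rectangle $-1\le q\le p\le 0$, $s>0$, with the linear case $p=q=0$ closing it off.

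For item (3), the most intricate case, I would cite Carlen-Frank-Lieb \cite{CFL16some} and Hiai \cite{Hiai16concavity-II} to obtain convexity at the boundary exponents $s=\min\{1/(p-1),1/(q+1)\}$ when $1\le p<2$, and $s=1/(q+2)$ when $p=2$. Extension to larger $s$ within each subregion relies on the $s(p+q)$-homogeneity of $\Psi_{p,q,s}$ together with the positivity of the trace functional, essentially as in \cite{CFL16some}.

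The main obstacle is item (3): the excluded corner $(p,q)=(1,-1)$ is a genuine boundary at which convexity fails, and the critical exponents $\min\{1/(p-1),1/(q+1)\}$ and $1/(q+2)$ are established by distinct specialized arguments that do not combine into a single uniform proof. Verifying that the cited theorems collectively cover the full stated range with no gap along these boundaries is where the real work lies; the later sections of the paper bypass this piecemeal structure by reducing the whole family $\Psi_{p,q,s}$ directly to $\Psi_{p,1-p,1}$ via the variational method advertised in the introduction.
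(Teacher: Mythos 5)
Your overall strategy---treating Proposition \ref{prop:known results on psi} as a compilation of previously known results and matching each parameter region to its source---is in fact how the paper itself handles this statement: it is quoted from \cite{CFL18conjecture} without proof, and the paper's real contribution is to reprove and strengthen it later (Theorem \ref{thm:A}, proved in Section \ref{sect:proofs} by reducing to Theorem \ref{thm:one variable case} and ultimately to Lemma \ref{lem:Lieb-Ando} via the variational identities \eqref{equ:variational method min-concave} and \eqref{equ:variational method max-convex}). So a citation-based verification is a legitimate route here. However, two concrete points in your plan do not hold up. First, for item (2) you propose to start from Ando's convexity of $\Psi_{p,1-p,1}$ with $-1\le p<0$; that result lives on the line $q=1-p\in(1,2]$, i.e.\ in the region relevant to item (3), and says nothing about the square $-1\le q\le p\le 0$. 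The $s=1$ base case of item (2), namely joint convexity of $\Tr K^*A^pKB^q$ for $-1\le q\le p\le 0$, is Lieb's result \cite{Lieb73WYD} (as the paper states right after the proposition), with the extension to all $s>0$ due to Hiai.

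Second, and more seriously, your mechanism for passing from the critical exponent to larger $s$ in item (3)---``the $s(p+q)$-homogeneity of $\Psi_{p,q,s}$ together with the positivity of the trace functional''---is not a proof. Homogeneity would help only if $\Psi_{p,q,s}$ were an increasing convex function of $\Psi_{p,q,s_0}$, but $\Tr X^{s}\neq\bigl(\Tr X^{s_0}\bigr)^{s/s_0}$, so no such composition exists; nor is $(A,B)\mapsto B^{q/2}K^*A^pKB^{q/2}$ jointly operator convex, so composing with the monotone convex map $X\mapsto\Tr X^{s}$ is also unavailable. The monotonicity in $s$ of convexity is obtained in the literature (and in this paper's Step 2) by a genuine variational argument: \eqref{equ:variational method max-convex} expresses the trace function at exponent $s$ as a supremum of jointly convex functions built from the critical-exponent case, and Lemma \ref{lem:convex-concave} (1) then transfers convexity. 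So for item (3) you must either spell out this variational step or invoke the theorems of \cite{CFL16some,Hiai16concavity-II} for the full ranges of $s$ they actually cover, rather than only at the boundary exponents. With those repairs the bookkeeping closes; note also that the paper's Theorem \ref{thm:A} ultimately supersedes item (3) by lowering the threshold to $s\ge\frac{1}{p+q}$, which is exactly what your concluding remark anticipates.
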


For more historical details of these results, see the discussions after \cite[Theorem 2]{CFL18conjecture}. We only comment here that the case $s=1$, which was first studied in the history, is due to Lieb \cite{Lieb73WYD} for $0\leq q\leq p\leq1$ with $p+q\leq 1$, as well as for $-1\leq q\le p\leq0$, and due to Ando \cite{Ando79} for $-1\leq q\leq0,~1\leq p<2$, with $p+q\geq1$. Their work played an important role in the development of matrix analysis.

The following proposition, due to Hiai \cite{Hiai13concavity-I}, gives the necessary conditions for $\Psi_{p,q,s}$ to be jointly convex or jointly concave. 

\begin{prop}\cite[Propositions 5.1(2) and 5.4(2)]{Hiai13concavity-I}\cite[Proposition 3]{CFL18conjecture}\label{prop:ness for psi}
	Let $p\geq q$ and $s>0$. Suppose that $(p,q)\ne(0,0)$ and $K=I$.
	\begin{enumerate}
		\item If $\Psi_{p,q,s}$ is jointly concave for $\h=\mathbb{C}^2$, then $0\leq q\leq p\leq1$ and $0<s\leq\frac{1}{p+q}$.
		\item If $\Psi_{p,q,s}$ is jointly convex for $\h=\mathbb{C}^4$, then either $-1\leq q\leq p\leq 0$ and $s>0$ or $-1\leq q\leq0,~1\leq p\leq 2,~ (p,q)\ne(1,-1)$ and $s\geq\frac{1}{p+q}$.
	\end{enumerate}
\end{prop}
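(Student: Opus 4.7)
The plan is to derive each stated necessary condition by testing the hypothesized joint concavity/convexity of $\Psi_{p,q,s}$ against explicit parametric families of positive definite matrices in the stipulated dimension, and extracting concrete inequalities on $p$, $q$, $s$ from the resulting scalar inequality.

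The first step, common to both (1) and (2), is to restrict to commuting (simultaneously diagonal) pairs $A = \operatorname{diag}(a_1, a_2)$ and $B = \operatorname{diag}(b_1, b_2)$ on $\mathbb{C}^2$ (embedded block-diagonally into $\mathbb{C}^4$ when needed for (2)). With $K = I$ this reduces the trace function to
\[
\Psi_{p,q,s}(A,B) = \sum_{i=1}^{2} a_i^{ps}\, b_i^{qs},
\]
so joint concavity (resp.\ convexity) of $\Psi$ forces joint concavity (resp.\ convexity) of the scalar function $(a,b) \mapsto a^{ps} b^{qs}$ on $(0,\infty)^2$. A direct Hessian calculation then gives, for concavity, $ps \geq 0$, $qs \geq 0$, and $s(p+q) \leq 1$; and for convexity, the dichotomy that either $p, q \leq 0$, or (using $p \geq q$) $p \geq 1/s > 0 \geq q$ with $s(p+q) \geq 1$. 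Combined with $s > 0$, this already yields $p, q \geq 0$ and $s \leq 1/(p+q)$ in case (1), and the correct two branches in case (2).

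To sharpen these to the stated endpoint bounds --- $p \leq 1$ in (1), and $-1 \leq q$, $p \leq 2$, plus the exclusion $(p,q) \neq (1,-1)$ in (2) --- the plan is a second-derivative test at a \emph{non-scalar} base point. Taking $A_0 = B_0 = \operatorname{diag}(1, \tau)$ on $\mathbb{C}^2$ and perturbing by Hermitian off-diagonal directions $X$, $Y$, one computes
\[
\left.\frac{d^2}{dt^2}\right|_{t=0} \Psi_{p,q,s}(A_0 + tX,\, B_0 + tY)
\]
using the Fr\'echet derivative of the power function at a diagonal matrix, and extracts its leading behavior as $\tau \to 0^+$. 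After the $1$-dimensional contributions (already accounted for) are subtracted, the residual quadratic form in $(X, Y)$ must be negative semi-definite, and this sharper condition is exactly what forces $p \leq 1$. For part (2) the same template is applied on $\mathbb{C}^4$ with a block-diagonal base point $A_0 = D_1 \oplus D_2$, $B_0 = E_1 \oplus E_2$ and an off-block perturbation, where the two independent spectral scales available in dimension $4$ are what allow the endpoint bounds $p \leq 2$ and $q \geq -1$ to be detected separately. The excluded point $(p,q) = (1, -1)$ is handled by a direct computation exhibiting an explicit $4 \times 4$ pair along which $\Psi_{1, -1, s}$ fails joint convexity for every $s > 0$.

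The principal obstacle is precisely this sharpness step. A Hessian test at the scalar identity $A_0 = B_0 = I$ only reproduces the $1$-dimensional scalar conditions already obtained in the first step; each of the endpoint inequalities ($p \leq 1$; $p \leq 2$; $q \geq -1$; and the failure at $(1,-1)$) must therefore be detected at a non-scalar base point together with an asymptotic limit in an auxiliary spectral parameter $\tau$. Identifying the right base point, perturbation direction, and scaling regime for each of the four endpoint bounds, while keeping the analysis sharp enough to match the stated ranges exactly, is the main technical content of the proof.
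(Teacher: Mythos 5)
The paper itself never proves this proposition; it is imported verbatim from Hiai (Propositions 5.1(2) and 5.4(2)) and from the Carlen--Frank--Lieb survey (Proposition 3), so the real question is whether your argument would stand on its own. Your first step is fine: restricting to simultaneously diagonal $A,B$ with $K=I$ reduces $\Psi_{p,q,s}$ to $\sum_i a_i^{ps}b_i^{qs}$, and the Hessian of $(a,b)\mapsto a^{ps}b^{qs}$ gives $p,q\ge 0$ and $s\le \frac{1}{p+q}$ in case (1), and in case (2) the dichotomy ``$p,q\le 0$'' or ``$q\le 0$, $ps\ge 1$, $s(p+q)\ge 1$''. But this commutative test is the easy part, and your claim that it ``already yields the correct two branches in case (2)'' is not accurate: it only gives $p\ge 1/s$, not $p\ge 1$ (for instance $p=0.8$, $q=-0.1$, $s=2$ passes the diagonal test but lies outside the stated range), so $p\ge 1$ is yet another endpoint you must still establish, in addition to the ones you list.

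The genuine gap is that every one of the endpoint constraints --- $p\le 1$ in (1); $p\ge 1$, $p\le 2$, $q\ge -1$ and the exclusion of $(p,q)=(1,-1)$ in (2) --- is precisely what the commutative restriction cannot detect, and for these you supply only a programme: a second-derivative test at an unspecified non-scalar base point $\operatorname{diag}(1,\tau)$ with an unspecified off-diagonal perturbation and an asymptotic regime $\tau\to 0^+$, a block-diagonal $4\times 4$ analogue ``with two spectral scales'', and, for $(1,-1)$, an ``explicit $4\times 4$ pair'' that is never exhibited. No perturbation direction is written down, no quadratic form is computed, no asymptotics are extracted, and you concede that identifying these data ``is the main technical content of the proof.'' That content is exactly what Hiai's cited propositions provide through concrete $2\times 2$ and block $4\times 4$ examples and careful expansions; without it the proposal proves only a strictly weaker set of necessary conditions and therefore does not constitute a proof of the proposition as stated.
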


From the above two propositions, Carlen, Frank and Lieb raised Conjecture \ref{conj:CFL}. Some partial results were known before the present paper, as pointed out in Proposition \ref{prop:known results on psi} (3). 

\smallskip

We close this section with the proof of Proposition \ref{prop:characterization of DPI}. It comes from \cite[Proposition 7]{CFL18conjecture}, following a well-known argument due to Lindblad \cite{Lindblad75} and Uhlmann \cite{Uhlmann73}. 

\begin{proof}[Proof of Proposition \ref{prop:characterization of DPI}]
	We use $\Psi$ to denote $\Psi_{p,q,1/(p+q)}$ with $K=I$. We only prove the case $\a>1$, since the proof for $\a<1$ is similar. Then it is equivalent to show that $\Psi$ satisfies the inequality
	$$
	\Psi(\mathcal{E}(\rho),\mathcal{E}(\sigma))\le\Psi(\rho,\sigma),
	$$ 
	for any CPTP map $\mathcal{E}$ on $\bh$, for all $\rho,\sigma\in \dh^{\times}$ and for all $\h$ if and only if $\Psi$ is jointly convex.
	\smallskip
	
	To show the ``if'' part, take any CPTP map $\mathcal{E}:\bh\to \bh$. Then we can write $\mathcal{E}$ as
	\[
	\mathcal{E}(\gamma)=\Tr_2U(\gamma\otimes \delta)U^*,
	\]
	where $\delta\in\mathcal{D(H')}$, $U$ is unitary on $\h\otimes\h'$, and $\h'$ is a Hilbert space such that $N':=\dim \h'\le (\dim\h)^2$. Here $\Tr_2$ denotes the usual partial trace over $\h'$. For a proof, see for example \cite[Lemma 5]{Lindblad75}. It origins in the celebrated Stinespring's Theorem \cite{Stinespring55}. Let $du$ denote the normalized Haar measure on the group of all unitaries on $\h'$, then
	\begin{equation}\label{equ:Schur's lemma}
		\mathcal{E}(\gamma)\otimes\frac{I_{\h'}}{N'}=\int(I_{\h}\otimes u)U(\gamma\otimes\delta)U^*(I_{\h}\otimes u^*)du,
	\end{equation}
	where $I_{\h}$ and $I_{\h'}$ are the identity maps over $\h$ and $\h'$, respectively. By the tensor property of $\Psi$, we have
	\[
\Psi(\mathcal{E}(\rho),\mathcal{E}(\sigma))
=\Psi\left(\mathcal{E}(\rho)\otimes\frac{I_{\h'}}{N'},\mathcal{E}(\sigma)\otimes\frac{I_{\h'}}{N'}\right).
\]
	From the joint convexity of $\Psi$ and \eqref{equ:Schur's lemma} it follows that
	\[
\Psi(\mathcal{E}(\rho),\mathcal{E}(\sigma))
\leq\int\Psi((I_{\h}\otimes u)U(\rho\otimes\delta)U^*(I_{\h}\otimes u^*),(I_{\h}\otimes u)U(\sigma\otimes\delta)U^*(I_{\h}\otimes u^*))du.
\]
	By the unitary invariance and the tensor property of $\Psi$ we obtain that
	\[
	\Psi(\mathcal{E}(\rho),\mathcal{E}(\sigma))\leq\Psi(\rho,\sigma),
	\] 
	as desired.
	
	\smallskip
	
	To show the ``only if'' part, for any $\rho_1,\rho_2,\sigma_1,\sigma_2\in\dh^{\times}$ and any $0<\lambda<1$, define 
	\[
	\rho=
	\begin{pmatrix*}
	\lambda\rho_1&0\\
	0&(1-\lambda)\rho_2
	\end{pmatrix*}
	\text{ and }
	\sigma=
	\begin{pmatrix*}
	\lambda\sigma_1&0\\
	0&(1-\lambda)\sigma_2
	\end{pmatrix*},
	\]
	in $\mathcal{D}(\h\oplus\h)^{\times}$. Since the map 
	\begin{equation}\label{equ:CPTP map}
	\mathcal{E}
	\begin{pmatrix*}
	a&b\\
	c&d
	\end{pmatrix*}
	=\frac{1}{2}	\begin{pmatrix*}
	a+d&0\\
	0&a+d
	\end{pmatrix*},
	\end{equation}
	is a CPTP map, we obtain from the monotonicity of $\Psi$ that
	\[
	\Psi(\mathcal{E}(\rho),\mathcal{E}(\sigma))\leq\Psi(\rho,\sigma),
	\]
	which is nothing but
	\[
	\Psi(\lambda\rho_1+(1-\lambda)\rho_2,\lambda\sigma_1+(1-\lambda)\sigma_2)\leq\lambda\Psi(\rho_1,\sigma_1)+(1-\lambda)\Psi(\rho_2,\sigma_2).
	\]
	This finishes the proof of the joint convexity of $\Psi$.
\end{proof}

\section{The proofs}\label{sect:proofs}

This section is devoted to the proof of Theorem \ref{thm:A}. The following classical results will serve as the building blocks to achieve the joint convexity/concavity of $\Psi_{p,q,s}$. The concavity result is due to Lieb \cite{Lieb73WYD} and the convexity result is due to Ando \cite{Ando79}. They have now many simple proofs, see for example \cite{NEE13}. We only comment here that they are based on the operator convexity of $A\mapsto A^p$ when $-1\le p< 0$ or $1\le p\le 2$, and the operator concavity of $A\mapsto A^p$ when $0<p\le 1$.

\begin{lem}\cite{Lieb73WYD,Ando79}\label{lem:Lieb-Ando}
	For any $K\in \bh^{\times}$, the function
	$$\Psi_{p,1-p,1}(A,B)=\Tr K^*A^p KB^{1-p},~~A,B\in\ph^{\times},$$ 
	is 
	\begin{enumerate}
		\item jointly concave if $0<p\le1$;
		\item jointly convex if $-1\le p<0$.
	\end{enumerate}
\end{lem}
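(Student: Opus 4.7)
My plan is to reduce Lemma \ref{lem:Lieb-Ando} to the classical joint operator concavity/convexity of weighted geometric means via a purification trick, which automatically accommodates arbitrary invertible $K$.

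First, I would introduce the (unnormalized) maximally entangled vector $|\Omega\rangle := \sum_{i} |i\rangle \otimes |i\rangle \in \h \otimes \h$ and define $|K\rangle := (K \otimes I)|\Omega\rangle$. Using the standard identity $\langle \Omega | X \otimes Y^T | \Omega \rangle = \Tr(XY)$, a direct calculation gives
\[
\Psi_{p,1-p,1}(A, B) = \Tr K^* A^p K B^{1-p} = \langle K | \, A^p \otimes (B^T)^{1-p} \, | K \rangle.
\]
Since transposition $B \mapsto B^T$ is a linear bijection preserving positivity, it suffices to prove joint operator concavity (for $0 < p \le 1$), respectively convexity (for $-1 \le p < 0$), of the operator-valued map $(A, B) \mapsto A^p \otimes B^{1-p}$ on $\phx \times \phx$; the scalar expectation $\langle K | \cdot | K\rangle$ is a positive linear functional and so transmits these properties to $\Psi_{p,1-p,1}$.

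The next step exploits commutativity. On $\h \otimes \h$, the positive operators $\tilde A := A \otimes I$ and $\tilde B := I \otimes B$ commute, and each is affine in its argument. By commutativity,
\[
A^p \otimes B^{1-p} \;=\; \tilde A^p \tilde B^{1-p} \;=\; \tilde A \,\#_{1-p}\, \tilde B,
\]
where $X \#_t Y := X^{1/2}(X^{-1/2} Y X^{-1/2})^t X^{1/2}$ denotes the $t$-weighted geometric mean. For $t = 1-p \in [0,1]$, i.e.\ $p \in [0,1]$, the classical Kubo-Ando theorem asserts joint operator concavity of $\#_t$; composed with the affine maps $A \mapsto \tilde A$ and $B \mapsto \tilde B$, this yields joint concavity of $A^p \otimes B^{1-p}$ and hence assertion (1). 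For $t = 1-p \in [1,2]$, i.e.\ $p \in [-1,0]$, the corresponding extension gives joint operator convexity of $\#_t$, which yields assertion (2).

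The main obstacle I anticipate lies in cleanly justifying the joint operator concavity/convexity of $\#_t$ itself, especially in the convex regime $t \in [1,2]$ where $\#_t$ is a formal extension of the Kubo-Ando mean rather than an operator mean in the strict sense. However, both regimes follow from the operator concavity of $s \mapsto s^t$ for $t \in [0,1]$ and the operator convexity of $s \mapsto s^t$ for $t \in [-1, 0] \cup [1, 2]$ --- precisely the classical facts explicitly highlighted in the excerpt --- combined with a Schur-complement/variational characterization of $\#_t$. Once this operator-mean input is in hand, the purification identity and commutativity reduce the lemma to an essentially formal argument.
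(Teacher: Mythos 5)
The paper does not prove this lemma itself: it quotes it from Lieb and Ando and merely remarks that the modern simple proofs (e.g.\ the perspective-function proof in the cited reference \cite{NEE13}) rest on the operator concavity of $A\mapsto A^p$ for $0<p\le 1$ and operator convexity for $-1\le p<0$ and $1\le p\le 2$. Your purification identity plus the commuting weighted geometric mean $\tilde A\,\#_{1-p}\,\tilde B$ is exactly that standard route and is correct; the one ingredient to pin down is the joint operator convexity of $X\#_t Y$ for $t\in[1,2]$, i.e.\ the perspective theorem for the operator convex function $s\mapsto s^t$, and since $A\otimes I$ and $I\otimes B$ commute, even Effros's commuting-pair version of that theorem suffices.
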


Theorem \ref{thm:A} will be reduced to Lemma \ref{lem:Lieb-Ando} in three steps, using a variational method. The idea of the variational method is based on the following lemma \cite[Lemma 13]{CFL18conjecture}. We give the proof here for the reader's convenience.

\begin{lem}\label{lem:convex-concave}
	Let $X,Y$ be two convex subsets of vector spaces and $f:X\times Y\to \real$ a function.
	\begin{enumerate}
		\item [(1)] If $f(\cdot,y)$ is convex (resp. concave) for any $y\in Y$, then $x \mapsto \sup_{y\in Y} f(x,y)$ (resp. $x \mapsto \inf_{y\in Y} f(x,y)$) is convex (resp. concave).
		\item [(2)] If $f$ is jointly convex (resp. concave) on $X\times Y$, then $x \mapsto \inf_{y\in Y} f(x,y)$ (resp. $x \mapsto \sup_{y\in Y} f(x,y)$) is convex (resp. concave).
	\end{enumerate}

\end{lem}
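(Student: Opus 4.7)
The plan is to prove each statement directly from the definitions; these are standard facts about sup/inf of (jointly) convex functions. By replacing $f$ with $-f$, the concave versions reduce to the convex ones, so I would only write out the convex half of each assertion.

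For part (1), set $g(x) := \sup_{y \in Y} f(x, y)$, fix $x_0, x_1 \in X$ and $\lambda \in [0,1]$. For every $y \in Y$, the convexity of $f(\cdot, y)$ gives
$$f(\lambda x_0 + (1-\lambda) x_1, y) \le \lambda f(x_0, y) + (1-\lambda) f(x_1, y) \le \lambda g(x_0) + (1-\lambda) g(x_1).$$
Taking the supremum over $y$ on the left-hand side yields the convexity of $g$. This step is entirely mechanical and requires nothing beyond the definitions.

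For part (2), set $h(x) := \inf_{y \in Y} f(x, y)$ and take $x_0, x_1 \in X$ and $\lambda \in [0,1]$. The subtle point here, and essentially the only place any thought is required, is that $h(\lambda x_0 + (1-\lambda) x_1)$ is an infimum over \emph{all} $y \in Y$, not only over convex combinations of selected $y$'s, so one cannot directly feed joint convexity in. I would handle this with an $\epsilon$-argument: for any $\epsilon > 0$, pick $y_0, y_1 \in Y$ with $f(x_i, y_i) \le h(x_i) + \epsilon$, which is possible by definition of the infimum. Since $Y$ is convex, $\lambda y_0 + (1-\lambda) y_1 \in Y$, and therefore
$$h(\lambda x_0 + (1-\lambda) x_1) \le f\bigl(\lambda x_0 + (1-\lambda) x_1,\, \lambda y_0 + (1-\lambda) y_1\bigr) \le \lambda h(x_0) + (1-\lambda) h(x_1) + \epsilon,$$
where the second inequality uses the joint convexity of $f$ together with the choice of $y_0, y_1$. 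Letting $\epsilon \to 0$ finishes the proof (if some $h(x_i) = -\infty$, the conclusion is vacuous). No genuine obstacle is expected; the only technical wrinkle is precisely this $\epsilon$-reduction, needed because the infima are not assumed to be attained.
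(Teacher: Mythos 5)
Your proposal is correct and follows essentially the same route as the paper: part (1) directly from the definition, and part (2) via the same $\epsilon$-selection of near-optimal $y_0,y_1$, joint convexity applied to $\lambda y_0+(1-\lambda)y_1$, and letting $\epsilon\to 0^+$. The only cosmetic differences are your explicit reduction of the concave case via $-f$ (the paper just says the argument is similar) and your remark on the $-\infty$ edge case.
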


\begin{proof}
	\begin{enumerate}
		\item [(1)] This follows immediately from the definition.
		\item [(2)] We only prove the convexity here. The proof of the concavity is similar. For any $x_1,x_2\in X$ and any $0<\lambda<1$, set $x:=\lambda x_1+(1-\lambda)x_2$. Then for any $\epsilon>0$ and $i=1,2$, there exists $y_i\in Y$ such that $f(x_i,y_i)\le \inf_{y\in Y}f(x_i,y)+\epsilon$. By the joint convexity of $f$, we have 
		\begin{equation*}
		\begin{split}
		\inf_{y\in Y} f(x,y)
		&\le f(x,\lambda y_1+(1-\lambda)y_2)\\
		&\le \lambda f(x_1,y_1)+(1-\lambda)f(x_2,y_2)\\
		&\le \lambda \inf_{y\in Y}f(x_1,y)+ (1-\lambda) \inf_{y\in Y}f(x_2,y)+\epsilon.
		\end{split}
		\end{equation*}
		Then the proof finishes by letting $\epsilon\to 0^+$.
	\end{enumerate}
\end{proof}

The following variational method is the key of the proof. It originates in \cite{CL08minkowski-II} and the special cases (either $r_0=1$ or $r_1=1$) have been widely used \cite{CFL18conjecture}.

\begin{thm}
	For $r_i>0,i=0,1,2$ such that $\frac{1}{r_0}=\frac{1}{r_1}+\frac{1}{r_2}$, we have for any $X,Y\in\bh^{\times}$ that
	\begin{equation}\label{equ:variational method min-concave}
	\Tr|XY|^{r_0}=\min_{Z\in\bh^{\times}}\left\{\frac{r_0}{r_1}\Tr|XZ|^{r_1}+\frac{r_0}{r_2}\Tr|Z^{-1}Y|^{r_2}\right\},
	\end{equation}
	and 
	\begin{equation}\label{equ:variational method max-convex}
	\Tr|XY|^{r_1}=\max_{Z\in\bh^{\times}}\left\{\frac{r_1}{r_0}\Tr|XZ|^{r_0}-\frac{r_1}{r_2}\Tr|Y^{-1}Z|^{r_2}\right\}.
	\end{equation}
\end{thm}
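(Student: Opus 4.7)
\emph{Approach.} The plan is to prove identity \eqref{equ:variational method min-concave} first, by combining H\"older's inequality for Schatten norms with the scalar Young inequality, and then derive \eqref{equ:variational method max-convex} from it by a purely algebraic rearrangement.

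\emph{Step 1: the inequality in \eqref{equ:variational method min-concave}.} Fix any $Z\in\bh^{\times}$ and write $XY=(XZ)(Z^{-1}Y)$. Since $\tfrac{1}{r_0}=\tfrac{1}{r_1}+\tfrac{1}{r_2}$, we have $\tfrac{r_0}{r_1}+\tfrac{r_0}{r_2}=1$ with both ratios in $(0,1)$, so H\"older's inequality for Schatten norms (valid for all positive exponents in finite dimensions) gives
$$\Tr|XY|^{r_0}\le \big(\Tr|XZ|^{r_1}\big)^{r_0/r_1}\big(\Tr|Z^{-1}Y|^{r_2}\big)^{r_0/r_2}.$$
Applying Young's inequality to the two non-negative scalars on the right with conjugate exponents $r_1/r_0>1$ and $r_2/r_0>1$ upgrades this to
$$\Tr|XY|^{r_0}\le \frac{r_0}{r_1}\Tr|XZ|^{r_1}+\frac{r_0}{r_2}\Tr|Z^{-1}Y|^{r_2},$$
which is the ``$\le$'' half of \eqref{equ:variational method min-concave}.

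\emph{Step 2: attaining the minimum.} Let $XY=U|XY|$ be the polar decomposition; since $X,Y$ are invertible, $U$ is unitary. I would set
$$Z_{*}:=X^{-1}U|XY|^{r_0/r_1}.$$
Then $XZ_{*}=U|XY|^{r_0/r_1}$, so $|XZ_{*}|=|XY|^{r_0/r_1}$ and $\Tr|XZ_{*}|^{r_1}=\Tr|XY|^{r_0}$. Similarly,
$$Z_{*}^{-1}Y=|XY|^{-r_0/r_1}U^{*}XY=|XY|^{-r_0/r_1}U^{*}U|XY|=|XY|^{1-r_0/r_1}=|XY|^{r_0/r_2},$$
so $\Tr|Z_{*}^{-1}Y|^{r_2}=\Tr|XY|^{r_0}$ as well. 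Substituting into the right-hand side of \eqref{equ:variational method min-concave} and using $\tfrac{r_0}{r_1}+\tfrac{r_0}{r_2}=1$ yields exactly $\Tr|XY|^{r_0}$, completing the proof of \eqref{equ:variational method min-concave}.

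\emph{Step 3: deducing \eqref{equ:variational method max-convex}.} Apply \eqref{equ:variational method min-concave} with its ``$Y$'' replaced by an invertible $W$ and the optimization variable relabelled as $Y$: for every $Y,W\in\bh^{\times}$,
$$\Tr|XW|^{r_0}\le \frac{r_0}{r_1}\Tr|XY|^{r_1}+\frac{r_0}{r_2}\Tr|Y^{-1}W|^{r_2}.$$
Multiplying by $r_1/r_0$, rearranging, and renaming $W\to Z$ gives
$$\frac{r_1}{r_0}\Tr|XZ|^{r_0}-\frac{r_1}{r_2}\Tr|Y^{-1}Z|^{r_2}\le \Tr|XY|^{r_1}$$
for every $Z\in\bh^{\times}$, which is the ``$\ge$'' half of \eqref{equ:variational method max-convex}. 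Equality is realized by the $Z$ that attains equality in the upstream instance of \eqref{equ:variational method min-concave}: with $XY=V|XY|$ the polar decomposition, a computation parallel to Step 2 (using the identity $r_1/r_0-1=r_1/r_2$, which follows from $\tfrac{1}{r_0}=\tfrac{1}{r_1}+\tfrac{1}{r_2}$) shows that $Z_{*}:=X^{-1}V|XY|^{r_1/r_0}$ gives $|XZ_{*}|=|XY|^{r_1/r_0}$ and $|Y^{-1}Z_{*}|=|XY|^{r_1/r_2}$, whence the right-hand side of \eqref{equ:variational method max-convex} at $Z_{*}$ equals $\Tr|XY|^{r_1}$.

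\emph{Expected obstacle.} There is no serious difficulty. The only points that require care are (i) invoking H\"older's inequality for Schatten quasi-norms when $r_0<1$, which is classical in finite dimensions, and (ii) verifying the elementary algebraic identity $r_1/r_0-1=r_1/r_2$ used in Step 3.
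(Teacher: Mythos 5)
Your proof is correct and follows essentially the same route as the paper: H\"older's inequality for Schatten (quasi-)norms combined with the scalar Young inequality gives the one-sided bounds, and explicit optimizers built from a polar decomposition yield equality, with the max-formula obtained from the same inequality by rearrangement. The only cosmetic difference is that you take the polar decomposition of $XY$ and write the optimizers as $Z_*=X^{-1}U|XY|^{r_0/r_1}$ (resp.\ $X^{-1}V|XY|^{r_1/r_0}$), whereas the paper uses $Y^*X^*=U|Y^*X^*|$ and $Z=YU|Y^*X^*|^{-r_1/(r_1+r_2)}$ (resp.\ $YU|Y^*X^*|^{r_1/r_2}$); these choices are equivalent since $X,Y$ are invertible.
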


\begin{proof}
	For any $p>0$ define $\Vert\cdot\Vert_p$ as $\|A\|^p_p:=\Tr|A|^p$. For any $Z\in\bh^{\times}$, we have by H\"older's inequality that
	\[
	\Tr|XY|^{r_0}\leq\Vert XZ\Vert^{r_0}_{r_1}\Vert Z^{-1}Y\Vert^{r_0}_{r_2}=[\Tr|XZ|^{r_1}]^{\frac{r_0}{r_1}}[\Tr|Z^{-1}Y|^{r_2}]^{\frac{r_0}{r_2}}.
	\]
	For a proof of H\"older's inequality, see \cite[Exercise IV.2.7]{Bhatia97matrixanalysis}. Actually it is a special case of \cite[Exercise IV.2.7]{Bhatia97matrixanalysis} by choosing the unitarily invariant norm $\vertiii{\cdot}$ to be $\|\cdot\|_{1}$. And \cite[Exercise IV.2.7]{Bhatia97matrixanalysis} can be proved by almost the same argument as the proof of \cite[Corollary IV.2.6]{Bhatia97matrixanalysis}, since \cite[Theorem IV.2.5]{Bhatia97matrixanalysis} is valid for all $r>0$.
	
	Then from Young's inequality for numbers (or AM-GM inequality): $x^{\a}y^{\beta}\leq\a x+\beta y$ for positive $x,y$ and positive $\a,\beta$ such that $\a+\beta=1$, it follows that
	\begin{equation}\label{ineq: Holder plus Young-inf}
	\Tr|XY|^{r_0}\leq[\Tr|XZ|^{r_1}]^{\frac{r_0}{r_1}}[\Tr|Z^{-1}Y|^{r_2}]^{\frac{r_0}{r_2}}\leq\frac{r_0}{r_1}\Tr|XZ|^{r_1}+\frac{r_0}{r_2}\Tr|Z^{-1}Y|^{r_2}.
	\end{equation}
	By exchanging $Y$ and $Z$, we have
	\begin{equation}\label{ineq: Holder plus Young-sup}
	\Tr|XY|^{r_1}\geq\frac{r_1}{r_0}\Tr|XZ|^{r_0}-\frac{r_1}{r_2}\Tr|Y^{-1}Z|^{r_2}.
	\end{equation}
	
	In view of \eqref{ineq: Holder plus Young-inf}, to prove \eqref{equ:variational method min-concave} it suffices to find a minimizer. For this let $Y^*X^*=U|Y^*X^*|$ be the polar decomposition of $Y^*X^*$, then $XYU=|Y^*X^*|$. Set $Z:=YU|Y^*X^*|^{-\frac{r_1}{r_1+r_2}}$, then we have 
	\[
	XZ=XYU|Y^*X^*|^{-\frac{r_1}{r_1+r_2}}=|Y^*X^*|^{\frac{r_2}{r_1+r_2}},~~Z^{-1}Y=|Y^*X^*|^{\frac{r_1}{r_1+r_2}}U^*.
	\]
	Using the facts that $\Vert\cdot \Vert_p$ is unitarily invariant and $\Vert A \Vert_p=\Vert A^*\Vert_p$ for all $A$, we have
	\[
	\Tr|XZ|^{r_1}=\Tr|Y^*X^*|^{\frac{r_1 r_2}{r_1+r_2}}=\Tr|XY|^{\frac{r_1 r_2}{r_1+r_2}}=\Tr|XY|^{r_0},
	\]
	and
	\[
	\Tr|Z^{-1}Y|^{r_2}=\Tr|Y^*X^*|^{\frac{r_1 r_2}{r_1+r_2}}=\Tr|XY|^{\frac{r_1 r_2}{r_1+r_2}}=\Tr|XY|^{r_0}.
	\]
	Hence $\Tr|XY|^{r_0}=\frac{r_0}{r_1}\Tr|XZ|^{r_1}+\frac{r_0}{r_2}\Tr|Z^{-1}Y|^{r_2}$, which proves \eqref{equ:variational method min-concave}.
	
	In view of \eqref{ineq: Holder plus Young-sup}, to prove \eqref{equ:variational method max-convex} it suffices to find a maximizer. For this let $U$ be as above and choose $Z$ to be $YU|Y^*X^*|^{\frac{r_1}{r_2}}$, then
	\[
	XZ=XYU|Y^*X^*|^{\frac{r_1}{r_2}}=|Y^*X^*|^{\frac{r_1+r_2}{r_2}},~~
	Y^{-1}Z=U|Y^*X^*|^{\frac{r_1}{r_2}}.
	\]
	It follows that 
	\[
	\Tr|XZ|^{r_0}=\Tr|Y^*X^*|^{\frac{(r_1+r_2)r_0}{r_2}}=\Tr|Y^*X^*|^{r_1}=\Tr|XY|^{r_1},
	\]
	and 
	\[    
	\Tr|Y^{-1}Z|^{r_2}=\Tr|Y^*X^*|^{r_1}=\Tr|XY|^{r_1}.
	\] 
	Hence $\Tr|XY|^{r_1}=\frac{r_1}{r_0}\Tr|XZ|^{r_0}-\frac{r_1}{r_2}\Tr|Y^{-1}Z|^{r_2}$ and the proof of \eqref{equ:variational method max-convex} is finished.
\end{proof}

\begin{rem}
	It is possible to generalize this variational method to the infinite dimensional case or to more general norm functions, which is beyond the aim of this paper. It is also possible to apply this variational method to trace functions with $n\geq 3$ variables. Let $r_j>0,j=0,1,\dots,n$ such that $\frac{1}{r_0}=\sum_{j=1}^{n}\frac{1}{r_j}$. Then we have for $X_1,\dots,X_n\in\bh^{\times}$ that
	\begin{equation}\label{equ:n-variables-min}
	\begin{split}
	&\Tr|X_1\cdots X_n|^{r_0}\\
	=&\min\left\{\frac{r_0}{r_1}\Tr|X_1Z_1|^{r_1}+\sum_{j=2}^{n-1}\frac{r_0}{r_j}\Tr|Z^{-1}_{j-1}X_jZ_j|^{r_j}+\frac{r_0}{r_n}\Tr|Z^{-1}_{n-1}X_n|^{r_n}\right\},
	\end{split}
	\end{equation}
	and 
	\begin{equation}\label{equ:n-variable-max}
	\begin{split}
	&\Tr|X_1\cdots X_n|^{r_1}\\
	=&\max\left\{\frac{r_1}{r_0}\Tr|X_{1}Z_{1}|^{r_0}-\sum_{j=2}^{n-1}\frac{r_1}{r_{j}}\Tr|Z^{-1}_{j}X^{-1}_{j}Z_{j-1}|^{r_{j}}-\frac{r_1}{r_n}\Tr|X^{-1}_{n}Z_{n-1}|^{r_n}\right\},
	\end{split}
	\end{equation}
	where min and max run over all $Z_1,\dots,Z_{n-1}\in \bh^{\times}$. The proof is similar to the two variables case. We only explain here that min is indeed achieved for \eqref{equ:n-variables-min}. Let $X^*_n\cdots X^*_1=U|X^*_n\cdots X^*_1|$ be the polar decomposition of $X^*_n\cdots X^*_1$.  Then set
	\[
	Z_j:=X_{j+1}\cdots X_nU|X^*_n\cdots X^*_1|^{\a_j},~~\a_j=\sum_{k=1}^{j}\frac{r_0}{r_{k}}-1
	\]
	for $1\leq j\leq n-1$. One can check that 
	\[
	\Tr|X_1\cdots X_n|^{r_0}=\frac{r_0}{r_1}\Tr|X_1Z_1|^{r_1}+\sum_{j=2}^{n-1}\frac{r_0}{r_j}\Tr|Z^{-1}_{j-1}X_jZ_j|^{r_j}+\frac{r_0}{r_n}\Tr|Z^{-1}_{n-1}X_n|^{r_n}.
	\]
\end{rem}

Now we are ready to proceed with the three steps of reductions. Note that \textbf{Step 1} is enough to finish the proof of Theorem \ref{thm:A} and confirm Conjectures \ref{conj:Audenaert-Datta} and \ref{conj:CFL}.

\smallskip
\textbf{Step 1:} In the first step we reduce the joint convexity/concavity of $\Psi_{p,q,s}$ to the convexity/concavity of
\[
\Upsilon_{p,s}(A):=\Tr(K^*A^{p}K)^s,~~A\in\ph^{\times},
\]
for all $K\in \bhx$, which has already been thoroughly studied.

\begin{thm}\cite[Proposition 5]{CFL18conjecture}\label{thm:one variable case}
	For any $K\in \bhx$, $\Upsilon_{p,s}$ is
	\begin{enumerate}
		\item concave if $0< p\leq1$ and $0<s\leq\frac{1}{p}$;
		\item convex if $-1\leq p\leq 0$ and $s>0$;
		\item convex if $1\leq p\leq 2$ and $s\geq\frac{1}{p}$.
	\end{enumerate}
\end{thm}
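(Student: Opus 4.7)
The plan is to derive Theorem \ref{thm:one variable case} from the classical Lieb/Ando result Lemma \ref{lem:Lieb-Ando} by combining a scalar Young-type trace identity with the variational identities \eqref{equ:variational method min-concave}/\eqref{equ:variational method max-convex}. The first ingredient is the standard consequence of Klein's inequality applied to $t\mapsto t^s$: for every $X\in\phx$,
\[
\Tr X^s=\inf_{B>0}\bigl\{s\Tr XB^{s-1}+(1-s)\Tr B^s\bigr\}\quad(0<s<1),
\]
with equality at $B=X$, and the dual
\[
\Tr X^s=\sup_{B>0}\bigl\{s\Tr XB^{s-1}-(s-1)\Tr B^s\bigr\}\quad(s>1).
\]
This linearises $\Tr X^s$ in $X$ at the price of a variational variable $B$.

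The conceptual core of the argument is the boundary case $s=1/p$ in cases (1) and (3). Setting $X=K^*A^pK$ in the appropriate formula above and performing the change of variables $B\mapsto B^p$ (which is a bijection of $\phx$) turns $B^{s-1}$ into $B^{1-p}$ and $\Tr B^{1/p}$ into $\Tr B$. Writing $B$ again for the new variable, $\Upsilon_{p,1/p}(A)$ becomes
\[
\sup_{B>0}\Bigl\{\tfrac{1}{p}\Tr K^*A^pKB^{1-p}-\tfrac{1-p}{p}\Tr B\Bigr\}\quad(0<p\le 1),
\]
\[
\inf_{B>0}\Bigl\{\tfrac{1}{p}\Tr K^*A^pKB^{1-p}+\tfrac{p-1}{p}\Tr B\Bigr\}\quad(1\le p\le 2).
\]
The main term is exactly $\Psi_{p,1-p,1}(A,B)$, which is jointly concave in $(A,B)$ for $0<p\le 1$ by Lemma \ref{lem:Lieb-Ando}(1) and jointly convex in $(A,B)$ for $1\le p\le 2$ by the $A\leftrightarrow B$, $K\leftrightarrow K^*$ swap of Lemma \ref{lem:Lieb-Ando}(2); the $\Tr B$ term is affine. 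Lemma \ref{lem:convex-concave}(2) then delivers the concavity/convexity of $\Upsilon_{p,1/p}$ in the respective ranges. This is the hard part, since it supplies Epstein's concavity theorem (and its dual convexity) without complex analysis.

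Off the boundary, I would invoke the paper's variational identity \eqref{equ:variational method min-concave} for case (1) and \eqref{equ:variational method max-convex} for case (3), both with $X=A^{p/2}$, $Y=K$ and the intermediate exponent equal to $2/p$. The $A$-dependent summand then becomes $ps\,\Tr(Z^*A^pZ)^{1/p}$, i.e. a positive multiple of $\Upsilon_{p,1/p}$ with $K$ replaced by $Z\in\bhx$, which is concave/convex in $A$ by the previous step; the remaining summand is a constant in $A$. The admissibility condition $r_2>0$ reduces precisely to $s<1/p$ in case (1) and $s>1/p$ in case (3), matching the required ranges exactly. Lemma \ref{lem:convex-concave}(1) (infimum of concave in case (1), supremum of convex in case (3)) then settles these.

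Case (2) is handled by direct use of the operator convexity of $A\mapsto A^p$ on $[-1,0]$, which makes $A\mapsto\Tr A^pM$ convex for every $M\ge 0$. When $s\ge 1$, the $\sup$ representation above applied to $X=K^*A^pK$ writes $\Upsilon_{p,s}(A)$ as a supremum over $Y>0$ of functions that are convex in $A$ for each $Y$, so Lemma \ref{lem:convex-concave}(1) gives convexity. When $0<s<1$, the $\inf$ representation, after the bijective change of variables $B=Y^{(s-1)/(1-p)}$ on $\phx$, becomes $\inf_{B>0}\{s\Tr K^*A^pKB^{1-p}+(1-s)\Tr B^t\}$ with $t=(1-p)s/(s-1)<0$. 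The first summand is jointly convex in $(A,B)$ by Lemma \ref{lem:Lieb-Ando}(2), and the second is convex in $B$ since $x\mapsto x^t$ is scalar convex for $t<0$; Lemma \ref{lem:convex-concave}(2) then concludes.
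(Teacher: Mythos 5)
Your argument is correct and is essentially the paper's own proof (its Steps 2 and 3): the same H\"older-based variational identities reduce the off-boundary exponents to the case $s=1/p$, the boundary case is linearized to $\Tr K^*A^pKB^{1-p}$ and settled by Lieb/Ando (Lemma \ref{lem:Lieb-Ando}) together with Lemma \ref{lem:convex-concave}, and your Klein-inequality representation of $\Tr X^s$ is exactly the paper's variational formula specialized to $Y=I$ after a change of variables. The only blemishes are cosmetic: in case (2) with $0<s<1$ the substitution should read $B\mapsto B^{(1-p)/(s-1)}$ (the displayed resulting formula is nevertheless correct), and your direct operator-convexity treatment of $s\ge 1$ in case (2) simply replaces, equivalently, the paper's reduction of $\Upsilon_{p,s}$ to $\Upsilon_{p,1}$.
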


See the discussions after Proposition 5 in \cite{CFL18conjecture} for more historical information. We only comment here that the proof of concavity for $0< p\leq 1$ with $s=\frac{1}{p}$ is due to Epstein \cite{Epstein73}. His analytic method is nowadays developed as an important tool in matrix analysis, in particular to deal with concavity (rather than convexity) of trace functions. We will give a simpler proof of this theorem later, without using Epstein's analytic approach.

\smallskip


\begin{proof}[Proof of Theorem \ref{thm:A} given Theorem \ref{thm:one variable case}]
	Before proceeding with the proof note first that 
	\[
	\Psi_{p,q,s}(A,B)=\Tr(B^{\frac{q}{2}}K^*A^{p}KB^{\frac{q}{2}})^s=\Tr|A^{\frac{p}{2}}KB^{\frac{q}{2}}|^{2s}.
	\]
	\smallskip
	
	(1) If $q=0$, then the claim reduces to Theorem \ref{thm:one variable case} (1). To show the case $0<q\leq p\leq 1$ and $0< s\leq\frac{1}{p+q}$, set $\lambda:=s(p+q)\in(0,1]$ and we apply \eqref{equ:variational method min-concave} to $(r_0,r_1,r_2)=(2s,\frac{2\lambda}{p},\frac{2\lambda}{q})$ and $(X,Y)=(A^{\frac{p}{2}}K,B^{\frac{q}{2}})$:
	\begin{equation}\label{equ:proof of thm psi 1}
	\Psi_{p,q,s}(A,B)=\min_{Z\in\bh^{\times}}\left\{\frac{p}{p+q}\Tr|A^{\frac{p}{2}}KZ|^{\frac{2\lambda}{p}}+\frac{q}{p+q}\Tr|Z^{-1}B^{\frac{q}{2}}|^{\frac{2\lambda}{q}} \right\}.
	\end{equation}
	Since $0<\frac{\lambda}{p}\leq\frac{1}{p}$ and $0<\frac{\lambda}{q}\leq\frac{1}{q}$, from Theorem \ref{thm:one variable case} (1) it follows that the maps
	\[
	A\mapsto\frac{p}{p+q}\Tr|A^{\frac{p}{2}}KZ|^{\frac{2\lambda}{p}}=\frac{p}{p+q}\Tr(Z^*K^*A^{p}KZ)^{\frac{\lambda}{p}}
	\]
	and 
	\[
	B\mapsto\frac{q}{p+q}\Tr|Z^{-1}B^{\frac{q}{2}}|^{\frac{2\lambda}{q}}=\frac{q}{p+q}\Tr(Z^{-1}B^{q}(Z^{-1})^*)^{\frac{\lambda}{q}}
	\]
	are both concave. Hence they are both jointly concave as functions in $(A,B)$ and so is $\Psi_{p,q,s}$ by Lemma \ref{lem:convex-concave} (1) and \eqref{equ:proof of thm psi 1}.
	\smallskip
	
	(2) If $p=0$, then the claim reduces to Theorem \ref{thm:one variable case} (2). Suppose $-1\leq q\leq p<0$ and $s>0$, then we apply \eqref{equ:variational method max-convex} to $(r_0,r_1,r_2)=(2t,2s,\frac{2}{-q})$ with $\frac{1}{t}=\frac{1}{s}-q$ and $(X,Y)=(A^{\frac{p}{2}}K,B^{\frac{q}{2}})$:
	\begin{equation}\label{equ:proof of thm psi 2}
	\Psi_{p,q,s}(A,B)=\max_{Z\in\bh^{\times}}\left\{\frac{s}{t}\Tr|A^{\frac{p}{2}}KZ|^{2t}+sq\Tr|B^{-\frac{q}{2}}Z|^{\frac{2}{-q}}\right\}.
	\end{equation}
	Note that $t>0$, $sq<0$ and $0<-q\leq 1$. By Theorem \ref{thm:one variable case} (1) and (2), the maps
	\[
	A\mapsto\frac{s}{t}\Tr|A^{\frac{p}{2}}KZ|^{2t}=\frac{s}{t}\Tr(Z^*K^*A^{p}KZ)^{t}
	\]
	and 
	\[
	B\mapsto sq\Tr|B^{-\frac{q}{2}}Z|^{\frac{2}{-q}}=sq\Tr(Z^*B^{-q}Z)^{\frac{1}{-q}}
	\]
	are both convex. Hence they are both jointly convex as functions in $(A,B)$ and so is $\Psi_{p,q,s}$ by Lemma \ref{lem:convex-concave} (1) and \eqref{equ:proof of thm psi 2}.
	
	\smallskip
	
	(3) If $q=0$, then the claim reduces to Theorem \ref{thm:one variable case} (3). Suppose $-1\leq q<0,~1\leq p\leq 2,~(p,q)\ne(1,-1)$ and $s\geq\frac{1}{p+q}$, then we apply \eqref{equ:variational method max-convex} to $(r_0,r_1,r_2)=(2t,2s,\frac{2}{-q})$ with $\frac{1}{t}=\frac{1}{s}-q$ and $(X,Y)=(A^{\frac{p}{2}}K,B^{\frac{q}{2}})$:
	\begin{equation}\label{equ:proof of thm psi 3}
	\Psi_{p,q,s}(A,B)=\max_{Z\in\bh^{\times}}\left\{\frac{s}{t}\Tr|A^{\frac{p}{2}}KZ|^{2t}+sq\Tr|B^{-\frac{q}{2}}Z|^{\frac{2}{-q}}\right\}.
	\end{equation}
	Since $sq<0$, $0<-q\leq 1$ and $t=\frac{1}{s^{-1}-q}\geq\frac{1}{p}$, we have by Theorem \ref{thm:one variable case} (1) and (3) that the maps 
	\[
	A\mapsto\frac{s}{t}\Tr|A^{\frac{p}{2}}KZ|^{2t}=\frac{s}{t}\Tr(Z^*K^*A^{p}KZ)^{t}
	\]
	and 
	\[
	B\mapsto sq\Tr|B^{-\frac{q}{2}}Z|^{\frac{2}{-q}}=sq\Tr(Z^*B^{-q}Z)^{\frac{1}{-q}}
	\]
	are both convex. Hence they are both jointly convex as functions in $(A,B)$ and so is $\Psi_{p,q,s}$ by Lemma \ref{lem:convex-concave} (1) and \eqref{equ:proof of thm psi 3}.
\end{proof}
\begin{rem}
	One can understand this step of reduction in the following heuristic way. In Figure \ref{figure:convex/concave}, the green region $[0,1]\times [0,1]$ is generated by two intervals of the $p$-axis and the $q$-axis: $[0,1]\times \{0\}$ and $\{0\}\times [0,1]$. That is how we deduce the joint concavity of $\Psi_{p,q,s}$ (Theorem \ref{thm:A} (1)) from the concavity of $\Upsilon_{p,s}$ (Theorem \ref{thm:one variable case} (1)) in the above proof. The proof of the yellow region of the Figure \ref{figure:convex/concave} can be understood in a similar way. 
\end{rem}

\textbf{Step 2:} In our second step we reduce Theorem \ref{thm:one variable case} to three particular cases.
\begin{thm}\cite{Epstein73,Hiai13concavity-I,CL08minkowski-II}\label{thm:reduction of convexity/concavity}
	Fix $K\in\bh^{\times}$, then 
	\begin{enumerate}
		\item $\Upsilon_{p,1/p}$ is concave when $0<p\le 1$ (Epstein);
		\item $\Upsilon_{p,s}$ is convex when $-1\le p<0$ and $0<s\le 1$ (Hiai);
		\item $\Upsilon_{p,1/p}$ is convex when $1\le p\le 2$ (Carlen-Lieb).
	\end{enumerate}
\end{thm}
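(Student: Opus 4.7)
My plan is to deduce each of (1), (2), (3) from Lemma~\ref{lem:Lieb-Ando} by applying the paper's variational identities \eqref{equ:variational method min-concave} and \eqref{equ:variational method max-convex} to $\Upsilon_{p,s}(A)=\Tr|A^{p/2}K|^{2s}$ with $X=A^{p/2}K$ and $Y=I$, and then restricting the extremizing $Z\in\bh^{\times}$ to the positive-cone slice $Z=C^{\alpha}$ for a suitable real $\alpha$. In each case one checks by a direct power-function calculation that a specific positive $C^{\star}=(K^*A^pK)^{t}$ attains the unrestricted extremum, so the restricted infimum/supremum still equals $\Upsilon_{p,s}(A)$.

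For item~(1), with $p\in(0,1]$, take $r_1=2/p$, $r_0=2$, $r_2=2/(1-p)$ in the max identity (the endpoint $p=1$ being trivial) and restrict $Z=C^{(1-p)/2}$. The saturating choice is $C^{\star}=(K^*A^pK)^{1/p}$, and one obtains
\[
\Upsilon_{p,1/p}(A)=\sup_{C\in\ph^{\times}}\Bigl\{\tfrac{1}{p}\Tr K^{*}A^{p}KC^{1-p}-\tfrac{1-p}{p}\Tr C\Bigr\}.
\]
By Lemma~\ref{lem:Lieb-Ando}(1) the first term is jointly concave in $(A,C)$ and the second is linear in $C$; Lemma~\ref{lem:convex-concave}(2) yields the concavity of $\Upsilon_{p,1/p}$. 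For item~(3), with $p\in[1,2]$, use the min identity with $r_0=2/p$, $r_1=2$, $r_2=2/(p-1)$ and restrict $Z=C^{-(p-1)/2}$ to obtain
\[
\Upsilon_{p,1/p}(A)=\inf_{C\in\ph^{\times}}\Bigl\{\tfrac{1}{p}\Tr K^{*}A^{p}KC^{1-p}+\tfrac{p-1}{p}\Tr C\Bigr\},
\]
saturated again by $C^{\star}=(K^*A^pK)^{1/p}$. Joint convexity of $\Tr K^{*}A^{p}KC^{1-p}$ on $\ph^{\times}\times\ph^{\times}$ for $p\in[1,2]$ follows from Lemma~\ref{lem:Lieb-Ando}(2) together with the paper's symmetry $\Psi_{q,p,s}(B,A)=\Psi_{p,q,s}(A,B)|_{K\to K^*}$, which recasts the expression as $\Psi_{1-p,p,1}$ with first exponent $1-p\in[-1,0]$ inside Ando's range; Lemma~\ref{lem:convex-concave}(2) concludes.

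For item~(2), the case $s=1$ is immediate from the operator convexity of $t\mapsto t^{p}$ on $(0,\infty)$ for $p\in[-1,0)$. For $s\in(0,1)$, apply the min identity with $r_0=2s$, $r_1=2$, $r_2=2s/(1-s)$ and restrict $Z=C^{(1-p)/2}$. After verifying that $C^{\star}=(K^*A^pK)^{-(1-s)/(1-p)}$ saturates the infimum, one obtains
\[
\Upsilon_{p,s}(A)=\inf_{C\in\ph^{\times}}\Bigl\{s\,\Tr K^{*}A^{p}KC^{1-p}+(1-s)\,\Tr C^{-\beta}\Bigr\},\quad \beta:=\tfrac{s(1-p)}{1-s}>0.
\]
The first term is jointly convex in $(A,C)$ by Lemma~\ref{lem:Lieb-Ando}(2). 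The second term is convex in $C$ because $t\mapsto t^{-\beta}$ is a convex scalar function on $(0,\infty)$ and the trace of such a convex function of a Hermitian matrix is convex (by Klein's trace inequality). Lemma~\ref{lem:convex-concave}(2) then delivers the convexity of $\Upsilon_{p,s}$.

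The technical crux is the bookkeeping of exponents: one has to pick $(r_0,r_1,r_2)$ and $\alpha$ so that the $C$-exponent in the first bracket lands exactly at $1-p$, where Lemma~\ref{lem:Lieb-Ando} applies directly, while the $C$-exponent in the second bracket stays in a range where trace-convexity or linearity is automatic. These twin requirements force the choices $Z=C^{\pm(1-p)/2}$ above and force the positivity $\beta>0$ in item~(2), which is exactly what lets the reduction to Lemma~\ref{lem:Lieb-Ando} go through.
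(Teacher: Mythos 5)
Your proposal is correct and follows essentially the same route as the paper: the same choices of $(r_0,r_1,r_2)$ in \eqref{equ:variational method min-concave} and \eqref{equ:variational method max-convex} with $Y=I$, leading to the same variational formulas over $\ph^{\times}$, and the same conclusion via Lemma \ref{lem:Lieb-Ando} and Lemma \ref{lem:convex-concave}(2). The only cosmetic difference is that you justify passing to positive $Z=C^{(1-p)/2}$ by exhibiting an explicit saturating $C^{\star}$, whereas the paper does it by the change of variables $Z\mapsto ZZ^{*}$ and $Z\mapsto Z^{1-p}$; both are valid.
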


\begin{proof}[Proof of Theorem \ref{thm:one variable case} given \ref{thm:reduction of convexity/concavity}]
    Indeed, when $0<p\le 1$, $0<s<\frac{1}{p}$ and $\frac{1}{s}=p+\frac{1}{t}$, by applying \eqref{equ:variational method min-concave} to $(r_0,r_1,r_2)=(2s,\frac{2}{p},2t)$ and $(X,Y)=(A^{\frac{p}{2}},K)$ we obtain that
	\[
	\Tr(K^{*}A^{p}K)^{s}=\min_{Z\in\bh^{\times}}\left\{sp\Tr(Z^{*}A^{p}Z)^{\frac{1}{p}}+\frac{s}{t}\Tr(K^{*}(Z^{-1})^*Z^{-1}K)^{t}\right\}.
	\]
	Then by Lemma \ref{lem:convex-concave} (1), the concavity of $\Upsilon_{p,1/p}$ implies the concavity of $\Upsilon_{p,s}$.
	
    \smallskip
    When $-1\le p<0$ and $s>1$, by applying \eqref{equ:variational method max-convex} to $(r_0,r_1,r_2)=(2,2s,\frac{2s}{s-1})$ and $(X,Y)=(A^{\frac{p}{2}},K)$ we obtain that 
	\begin{equation*}
	\Tr(K^{*}A^{p}K)^{s}
	=\max_{Z\in\bh^{\times}}\left\{s\Tr Z^{*}A^{p}Z-(s-1)\Tr(Z^{*}(K^{-1})^*K^{-1}Z)^{\frac{s}{s-1}}\right\}.
	\end{equation*}
	Then by Lemma \ref{lem:convex-concave} (1), the convexity of $\Upsilon_{p,1}$ implies the convexity of $\Upsilon_{p,s}$.
    
    \smallskip
	When $1\le p\le 2$, $s>\frac{1}{p}$ and $p=\frac{1}{s}+\frac{1}{t}$, by applying \eqref{equ:variational method max-convex} to $(r_0,r_1,r_2)=(\frac{2}{p},2s,2t)$ and $(X,Y)=(A^{\frac{p}{2}},K)$ we obtain that 
	\[
	\Tr(K^{*}A^{p}K)^{s}=\max_{Z\in\bh^{\times}}\left\{sp\Tr(Z^{*}A^{p}Z)^{\frac{1}{p}}-\frac{s}{t}\Tr(Z^{*}(K^{-1})^*K^{-1}Z)^{t}\right\}.
	\]
	Then by Lemma \ref{lem:convex-concave} (1), the convexity of $\Upsilon_{p,1/p}$ implies the convexity of $\Upsilon_{p,s}$.
\end{proof}	

\textbf{Step 3:} In the last step we reduce Theorem \ref{thm:reduction of convexity/concavity} to Lemma \ref{lem:Lieb-Ando}.

\begin{proof}[Proof of Theorem \ref{thm:reduction of convexity/concavity} given Lemma \ref{lem:Lieb-Ando}]
    The proof is inspired by the proof of (2) in \cite{CFL18conjecture}. Let us recall it first. If $s=1$, the convexity of $\Upsilon_{p,1}$ follows from the operator convexity of $A\mapsto A^p$ for $-1\le p<0$. If $0<s<1$, by applying \eqref{equ:variational method min-concave} to $(r_0,r_1,r_2)=(2s,2,\frac{2s}{1-s})$ and $(X,Y)=(A^{\frac{p}{2}}K,I)$, we have 
	\begin{equation*}
	\begin{split}
	\Tr(K^{*}A^{p}K)^{s}
	&=\min_{Z\in\bh^{\times}}\left\{s\Tr |A^{\frac{p}{2}}KZ|^2+(1-s)\Tr |Z^{-1}|^{\frac{2s}{1-s}}\right\}\\
	&=\min_{Z\in\ph^{\times}}\left\{s\Tr K^{*}A^{p}KZ+(1-s)\Tr Z^{\frac{s}{s-1}}\right\}\\
	&=\min_{Z\in\ph^{\times}}\left\{s\Tr K^{*}A^{p}KZ^{1-p}+(1-s)\Tr Z^{\frac{s(1-p)}{s-1}}\right\}.
	\end{split}
	\end{equation*}
	Since $\frac{s(1-p)}{s-1}<0$, the function $t\mapsto t^{\frac{s(1-p)}{s-1}}$ is convex. Thus $Z\mapsto \Tr Z^{\frac{s(1-p)}{s-1}}$ is convex (see for example \cite[Theorem 2.10]{Carlen10course}). This, together with Ando's convexity result (Lemma \ref{lem:Lieb-Ando} (2)) and Lemma \ref{lem:convex-concave} (2), yields the convexity of $\Upsilon_{p,s}$.
	
    \smallskip	
	
	Now we prove (1). There is nothing to prove when $p=1$. For $0<p<1$, by applying \eqref{equ:variational method max-convex} to $(r_0,r_1,r_2)=(2,\frac{2}{p},\frac{2}{1-p})$ and $(X,Y)=(A^{\frac{p}{2}}K,I)$, we have
		\begin{equation*}
		\begin{split}
		\Tr(K^{*}A^{p}K)^{\frac{1}{p}}
		&=\max_{Z\in\bh^{\times}}\left\{\frac{1}{p}\Tr |A^{\frac{p}{2}}KZ|^2-\frac{1-p}{p}\Tr |Z|^{\frac{2}{1-p}}\right\}\\
		&=\max_{Z\in\ph^{\times}}\left\{\frac{1}{p}\Tr K^{*}A^{p}KZ-\frac{1-p}{p}\Tr Z^{\frac{1}{1-p}}\right\}\\
		&=\max_{Z\in\ph^{\times}}\left\{\frac{1}{p}\Tr K^{*}A^{p}KZ^{1-p}-\frac{1-p}{p}\Tr Z\right\}.
		\end{split}
		\end{equation*}
	Then by Lieb's concavity result (Lemma \ref{lem:Lieb-Ando} (1)) and Lemma \ref{lem:convex-concave} (2), $\Upsilon_{p,1/p}$ is concave. 
	
	\smallskip
	(3) can be shown similarly. Indeed, the case $p=1$ is trivial. For $1<p\le 2$, by applying \eqref{equ:variational method min-concave} to $(r_0,r_1,r_2)=(\frac{2}{p},2,\frac{2}{p-1})$ and $(X,Y)=(A^{\frac{p}{2}}K,I)$, we have
	\begin{equation*}
	\begin{split}
	\Tr(K^{*}A^{p}K)^{\frac{1}{p}}
	&=\min_{Z\in\bh^{\times}}\left\{\frac{1}{p}\Tr |A^{\frac{p}{2}}KZ|^2+\frac{p-1}{p}\Tr |Z^{-1}|^{\frac{2}{p-1}}\right\}\\
	&=\min_{Z\in\ph^{\times}}\left\{\frac{1}{p}\Tr K^{*}A^{p}KZ+\frac{p-1}{p}\Tr Z^{\frac{1}{1-p}}\right\}\\
	&=\min_{Z\in\ph^{\times}}\left\{\frac{1}{p}\Tr K^{*}A^{p}KZ^{1-p}+\frac{p-1}{p}\Tr Z\right\}.
	\end{split}
	\end{equation*}
	Then by Ando's convexity result (Lemma \ref{lem:Lieb-Ando} (2)) and Lemma \ref{lem:convex-concave} (2), $\Upsilon_{p,1/p}$ is convex. 
\end{proof}

\begin{rem}
	Although the variational methods \eqref{equ:variational method min-concave} and \eqref{equ:variational method max-convex} admit analogues \eqref{equ:n-variables-min} and \eqref{equ:n-variable-max} of $n(\geq 3)$ variables, the joint convexity/concavity of 
	\[
	\ph^{\times}\times\cdots\times\ph^{\times}\ni (A_1,\dots,A_n)\mapsto\Tr(A_n^{\frac{p_n}{2}}K^{*}_{n-1}\cdots K^{*}_{1}A_1^{p_1}K_1\cdots K_{n-1}A_n^{\frac{p_n}{2}})^s
	\]
	can not be derived directly from Theorem \ref{thm:one variable case} because of the appearance of the term $\Tr|Z^{-1}_{j-1}X_jZ_j|^{r_j}$. For example, we have 
	\begin{equation}\label{equ:variation-three variables}
	\begin{split}
	&\Tr|X_{1}X_{2}X_{3}|^{r_0}\\
	=&\min_{Z_1,Z_2\in\bhx}\left\{\frac{r_0}{r_1}\Tr|X_{1}Z_{1}|^{r_1}+\frac{r_0}{r_2}\Tr|Z_{1}^{-1}X_{2}Z_{2}|^{r_2}+\frac{r_0}{r_3}\Tr|Z^{-1}_{2}X_{3}|^{r_3}\right\}.
	\end{split}
	\end{equation}
	To obtain the joint concavity of
	\[
	\ph^{\times}\times\ph\times\ph^{\times}\ni (A_1,A_2,A_3)\mapsto\Tr(A_3^{\frac{p_3}{2}}K^{*}_{2}A_2^{\frac{p_2}{2}}K^{*}_{1}A_1^{p_1}K_1A_2^{\frac{p_2}{2}}K_{2}A_3^{\frac{p_3}{2}})^s,
	\]
	via the variational method \eqref{equ:variation-three variables}, the concavity of the function of the form
	\[
	\ph^{\times}\ni A_2\mapsto\Tr|Y_{1}A_{2}^{\frac{p_2}{2}}Y_{2}|^{r_2}=\Tr(Y_2^*A_{2}^{\frac{p_2}{2}}Y_1^*Y_1A_{2}^{\frac{p_2}{2}}Y_2)^{\frac{r_2}{2}}
	\]
	is required. Unfortunately, little is known for general $Y_1^*Y_1\neq I$. Indeed, Carlen, Frank and Lieb proved that \cite[Corollary 3.3]{CFL16some} for $p,q,r\in\mathbb{R}\setminus\{0\}$, the function 
	\[
	(A,B,C)\mapsto\Tr C^{\frac{r}{2}}B^{\frac{q}{2}}A^{p}B^{\frac{q}{2}}C^{\frac{r}{2}}
	\]
	is never concave, and it is convex if and only if $q=2,~p,r<0$ and $-1\leq p+r<0$.
\end{rem}

\subsection*{Acknowledgement}
The author would like to thank Quanhua Xu, Adam Skalski, Ke Li and Zhi Yin for their valuable comments. He also would like to thank the anonymous referees for pointing out some errors in an earlier version of this paper and for helpful comments and suggestions that make this paper better. The research was partially supported by the NCN (National Centre of Science) grant 2014/14/E/ST1/00525, the French project ISITE-BFC (contract ANR-15-IDEX-03), NSFC No. 11826012, and the European Union's Horizon 2020 research and innovation programme under the Marie Sk\l odowska-Curie grant agreement No. 754411.

\bibliographystyle{alpha}
\bibliography{bibfile}

\end{document}